% LaTeX Article Template
\documentclass{siamart1116}
\usepackage{amsfonts}
\usepackage{accents}
\usepackage[margin=1in, letterpaper]{geometry}
\usepackage{graphicx}
\usepackage{bmpsize}
\usepackage{subcaption}
\usepackage{float}
\usepackage{epsfig}
\usepackage{multicol}
\usepackage{cite}
\usepackage{adjustbox,lipsum}

\newcommand{\vertiii}[1]{{\left\vert\kern-0.25ex\left\vert\kern-0.25ex\left\vert #1\right\vert\kern-0.25ex\right\vert\kern-0.25ex\right\vert}}
\begin{document}

\title{A Second Order Ensemble Timestepping Algorithm for Natural Convection}
\author{J. A. Fiordilino\thanks{The author is supported by the DoD SMART Scholarship.  The research herein was also partially supported by NSF grants CBET 1609120 and DMS 1522267.}}
\date{Updated: 6/16/17}
\maketitle

%%%%%%%%%%%%%%%%%%%%%%%%%%%%%
\begin{abstract}
	This paper presents an algorithm for calculating an ensemble of solutions to natural convection problems.  The ensemble average is the most likely temperature distribution and its variance gives an estimate of prediction reliability.  Solutions are calculated by solving two coupled linear systems, each involving a shared coefficient matrix, for multiple right-hand sides at each timestep.  Storage requirements and computational costs to solve the system are thereby reduced.  Moreover, this paper addresses a need for higher order methods to solve natural convection problems.  Stability and convergence of the method are proven under a timestep condition involving fluctuations of the velocity.  Numerical tests are provided which confirm the theoretical analyses.
\end{abstract}
\section{Introduction}
Ensemble calculations are essential in predictions of the most likely outcome of systems with uncertain data; for instance, weather forecasting \cite{Kalnay} and ocean modeling \cite{Lermusiaux}.  Furthermore, they are finding application in an increasing number of fields, including turbulence \cite{Nan2}, magnetohydrodynamics \cite{Moheb}, and 3D printing \cite{Sakthivel}.  Ensemble simulations classically involve J sequential, fine mesh runs or J parallel, coarse mesh runs of a given code.  This leads to a competition between ensemble size and mesh density.  We develop a linearly implicit timestepping method with shared coefficient matrices to address this issue.  For such methods, it is more efficient in both storage and solution time to solve J linear systems with a shared coefficient matrix than with J different matrices.  Prediction of thermal profiles is essential in many applications \cite{Bairi,Gebhart,Marshall,Ostrach}.  Herein, we extend an earlier study \cite{Fiordilino} regarding first order timestepping algorithms for natural convection based on the pioneering work for isothermal flows of Jiang and Layton \cite{Nan}.

Consider natural convection within an enclosed cavity with zero wall thickness, see Figure 1 for a typical setup.  Let $\Omega \subset \mathbb{R}^d$ (d=2,3) be a polyhedral domain with boundary $\partial \Omega$.  The boundary is partitioned such that $\partial \Omega = \overline{\Gamma_{1}}  \cup \overline{\Gamma_{2}}$ with $\Gamma_{1} \cap \Gamma_{2} =\emptyset$, $|\Gamma_{1}| > 0$, and $\Gamma_{1} = \Gamma_{H} \cup \Gamma_{N}$.  Given $u(x,0;\omega_{j}) = u^{0}(x;\omega_{j})$ and $T(x,0;\omega_{j}) = T^{0}(x;\omega_{j})$ for $j = 1,2, ... , J$, let $u(x,t;\omega_{j}):\Omega \times (0,t^{\ast}] \rightarrow \mathbb{R}^{d}$, $p(x,t;\omega_{j}):\Omega \times (0,t^{\ast}] \rightarrow \mathbb{R}$, and $T(x,t;\omega_{j}):\Omega \times (0,t^{\ast}] \rightarrow \mathbb{R}$ satisfy
\begin{align}
u_{t} + u \cdot \nabla u -Pr \Delta u + \nabla p &= PrRa\xi T + f \; \; in \; \Omega, \label{s2} \\
\nabla \cdot u &= 0 \; \; in \; \Omega,  \\
T_{t} + u \cdot \nabla T - \Delta T &= \gamma \; \; in \; \Omega, \label{s2T} \\
u = 0 \; \; on \; \partial \Omega,  \; \; \;
T = 1 \; \; on \; \Gamma_{N}, \; \; \;
T = 0 \; \; on \; \Gamma_{H}, \; \; \;
n \cdot \nabla T &= 0 \; \; on \; \Gamma_{2}, \label{s2f}
\end{align}
\noindent Here $n$ denotes the usual outward normal, $\xi$ denotes the unit vector in the direction of gravity, $Pr$ is the Prandtl number, and $Ra$ is the Rayleigh number.  Further, $f$ and $\gamma$ are the body force and heat source, respectively. 

Let $<u>^{n}_{e} := \frac{1}{J} \sum_{j=1}^{J} (2u^{n} - u^{n-1})$ and ${u'}^{n} = 2u^{n} - u^{n-1} - <u>^{n}_{e}$ be the extrapolated ensemble average and fluctuation; the ensemble average is denoted $<\cdot>$.  To present the idea, suppress the spatial discretization for the moment.  We apply an implicit-explicit (IMEX) time-discretization to the system (\ref{s2}) - (\ref{s2f}), while keeping the coefficient matrix independent of the ensemble members.  This leads to the following timestepping method:
\begin{align}
\frac{3u^{n+1} - 4u^{n} + u^{n-1}}{2\Delta t} + <u>^{n}_{e} \cdot \nabla u^{n+1} + {u'}^{n} \cdot \nabla (2u^{n} - u^{n-1}) - Pr \Delta u^{n+1} + \nabla p^{n+1} \label{d1} \\
= PrRa\xi (2T^{n}-T^{n-1}) + f^{n+1}, \notag \\
\nabla \cdot u^{n+1} = 0, \\
\frac{3T^{n+1} - 4T^{n} + T^{n-1}}{2\Delta t} + <u>^{n}_{e} \cdot \nabla T^{n+1} + {u'}^{n} \cdot \nabla (2T^{n} - T^{n-1}) - \Delta T^{n+1} = \gamma^{n+1}. \label{d2}
\end{align}
By lagging $u'$ and using linear extrapolation for the coupling term $\xi T$ in the method, the fluid and thermal problems uncouple and each sub-problem contains a shared coefficient matrix for all ensemble members.
\begin{figure}
	\centering
	\includegraphics[width=2.5in,height=2.5in, keepaspectratio]{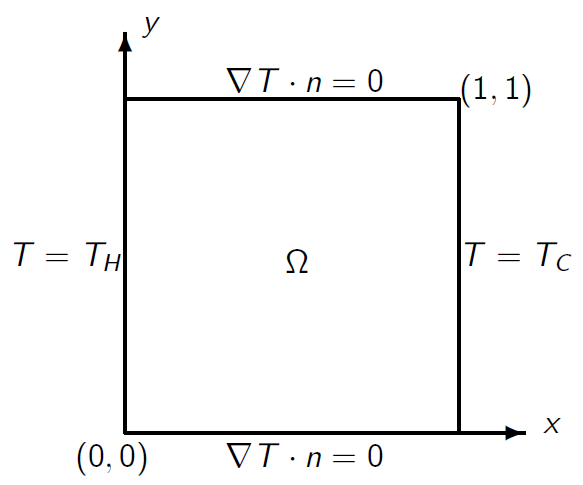}
	\caption{Domain and boundary conditions for double pane window problem benchmark.}
\end{figure}
In Section 2, we collect necessary mathematical tools.  In Section 3, we present an algorithm based on (\ref{d1}) - (\ref{d2}) in the context of the finite element method.  Stability and error analysis of the algorithm follow in Section 4.  In particular, under a CFL-type condition, we prove the stability of the proposed algorithm in Theorem \ref{t1} and its convergence in Theorem \ref{error:thick}.   We end with numerical experiments and conclusions in Sections 5 and 6.
%%%%%%%%%%%%%%%%%%%%%%%%%%%%%%%%%%%%%%
%%%%%%%%%%%%%%%%%%%%%%%%%%%%%%%%%%%%%%
\section{Mathematical Preliminaries}
The $L^{2} (\Omega)$ inner product is $(\cdot , \cdot)$ and the induced norm is $\| \cdot \|$.  Define the Hilbert spaces,
\begin{align*}
X &:= H^{1}_{0}(\Omega)^{d} = \{ v \in H^{1}(\Omega)^d : v = 0 \; on \; \partial \Omega \}, \;
Q := L^{2}_{0}(\Omega) = \{ q \in L^{2}(\Omega) : (1,q) = 0 \}, \\
W &:= H^{1}(\Omega), \; W_{\Gamma_{1}}:= \{ S \in W : S = 0 \; on \; \Gamma_{1} \}, \;
V := \{ v \in X : (q,\nabla \cdot v) = 0 \; \forall \; q \in Q \},
\end{align*}
and $H^{1}(\Omega)$ norm $\| \cdot \|_{1}$.
Moreover, we utilize the fractional order Hilbert space on the non-homogeneous Dirichlet boundary $H^{1/2}(\Gamma_{N})$ with corresponding norm
$$\| R \|_{1/2,\Gamma_{N}} := \Big(\int_{\Gamma_{N}} | R(s) |^{2} ds + \int_{\Gamma_{N}} \int_{\Gamma_{N}} \frac{|R(s)-R(s')|^{2}}{|s-s'|^{d}}dsds'\Big)^{1/2}.$$
Let $\tau:\Omega \rightarrow \mathbb{R}$ be an extension of $T\lvert_{\Gamma_{N}} = 1$ into the domain such that $\| \tau\|_{1} \leq C_{tr} \| 1 \|_{1/2,\Gamma_{N}} = C_{tr} \lvert \Gamma_{N}\rvert^{1/2}$ for some $C_{tr}>0$.

\noindent \textbf{Remark:}  For natural convection within a unit square or cubic enclosure with a pair of differentially heated vertical walls, the linear conduction profile $\tau(x) = 1 - x_{1}$, where $x_{1}$ denotes the spatial coordinate in the horizontal direction, is such an extension satisfying: $\| \tau\|_{1} \leq \frac{2\sqrt{3}}{3}$.
%extension satisfying: $\| \tau\|_{1} \leq C_{tr}\lvert \Gamma_{N}\rvert^{1/2}$ with $C_{tr} = {(\frac{1 + 3\lvert\Omega\rvert}{3\lvert\Gamma_{N}\rvert})}^{1/2}$.

The explicitly skew-symmetric trilinear forms are denoted:
\begin{align*}
b(u,v,w) &= \frac{1}{2} (u \cdot \nabla v, w) - \frac{1}{2} (u \cdot \nabla w, v) \; \; \; \forall u,v,w \in X, \\
b^{\ast}(u,T,S) &= \frac{1}{2} (u \cdot \nabla T, S) - \frac{1}{2} (u \cdot \nabla S, T) \; \; \; \forall u \in X, \; \forall T,S \in W.
\end{align*}
\noindent They enjoy the following continuity results and properties.
\begin{lemma} \label{l1}
There are constants $C_{1}, C_{2}, C_{3}, C_{4}, C_{5},$ and $C_{6}$ such that for all u,v,w $\in$ X and T,S $\in W$, $b(u,v,w)$ and $b^{\ast}(u,T,S)$ satisfy
\begin{align*}
b(u,v,w) &= (u \cdot \nabla v, w) + \frac{1}{2} ((\nabla \cdot u)v, w), \\
b^{\ast}(u,T,S) &= (u \cdot \nabla T, S) + \frac{1}{2} ((\nabla \cdot u)T, S), \\
b(u,v,w) &\leq C_{1} \| \nabla u \| \| \nabla v \| \| \nabla w \|, \\
b(u,v,w) &\leq C_{2} \sqrt{\| u \| \| \nabla u \|} \| \nabla v \| \| \nabla w \|, \\
b^{\ast}(u,T,S) &\leq C_{3} \| \nabla u \| \| \nabla T \| \| \nabla S \|,\\
b^{\ast}(u,T,S) &\leq C_{4} \sqrt{\| u \| \| \nabla u \|} \| \nabla T \| \| \nabla S \|,\\
b(u,v,w) &\leq C_{5} \| \nabla u \| \| \nabla v \| \sqrt{\| w \| \| \nabla w \|}, \\
b^{\ast}(u,T,S) &\leq C_{6} \| \nabla u \| \| \nabla T \| \sqrt{\| S \| \| \nabla S \|}.
\end{align*}
\begin{proof}
See Lemma 1 on p. 2 of \cite{Fiordilino}.
\end{proof}
\end{lemma}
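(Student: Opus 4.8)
The statement splits into the two algebraic identities and the six continuity bounds. The plan is to dispatch the identities by integration by parts and to obtain every bound by a uniform template of H\"older's inequality followed by Sobolev embedding and a Gagliardo--Nirenberg (Ladyzhenskaya) interpolation valid for $d\in\{2,3\}$. Starting from $b(u,v,w)=\frac{1}{2}(u\cdot\nabla v,w)-\frac{1}{2}(u\cdot\nabla w,v)$, I would integrate the second term by parts: moving the derivative off $w$ produces a boundary integral $\int_{\partial\Omega}(u\cdot n)\,v_j w_j\,ds$ that vanishes because $u\in X$ has zero trace, leaving $(u\cdot\nabla w,v)=-((\nabla\cdot u)v,w)-(u\cdot\nabla v,w)$. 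Substituting this back collapses the two half-terms into $b(u,v,w)=(u\cdot\nabla v,w)+\frac{1}{2}((\nabla\cdot u)v,w)$, which is the first identity. The computation for $b^{\ast}$ is verbatim the same, and the boundary term is again annihilated by $u\in X$ rather than by any condition on $T,S$, which is why no constraint on the temperature trace is needed for the second identity.

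For the continuity estimates the guiding fact is that in three dimensions the only admissible endpoints are $H^1\hookrightarrow L^6$ and the interpolation $\|\phi\|_{L^3}\leq C\|\phi\|^{1/2}\|\nabla\phi\|^{1/2}$, and the H\"older split $1/p+1/2+1/q=1$ then decides where the $\sqrt{\|\cdot\|\,\|\nabla\cdot\|}$ factor lands. The step I expect to be the only genuinely non-mechanical one is choosing which representation of the trilinear term to start from, since this differs from bound to bound. For the first and second bounds I would start from the definition: the balanced split $\|u\|_{L^4}\|\nabla v\|\|w\|_{L^4}$ gives the first, while $\|u\|_{L^3}\|\nabla v\|\|w\|_{L^6}$ puts the interpolation on the convecting field $u$ and gives the second. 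Crucially the second must be done on the definition, since the divergence term $((\nabla\cdot u)v,w)$ coming from the identity only supplies a full $\|\nabla u\|$ and cannot be reduced to $\sqrt{\|u\|\,\|\nabla u\|}$. For the seventh bound the situation reverses: the term $(u\cdot\nabla w,v)$ in the definition carries a full gradient on $w$ and cannot produce $\sqrt{\|w\|\,\|\nabla w\|}$, so there I would instead start from the identity, where $w$ appears ungradiented in both surviving terms, and use $\|u\|_{L^6}\|\nabla v\|\|w\|_{L^3}$. The estimates for $b^{\ast}$ (the fifth, sixth, and eighth bounds) are the scalar transcriptions obtained by the identical three representation choices with $v,w$ replaced by $T,S$.

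The remaining work is bookkeeping. First, matching the stated gradient seminorms to the Sobolev embeddings requires the Poincar\'e--Friedrichs inequality to upgrade $\|\nabla\cdot\|$ to the full $H^1$ norm: this is immediate on $X=H^1_0(\Omega)^d$, but the thermal estimates rely on the zero trace on $\Gamma_1$ with $|\Gamma_1|>0$, so they are properly understood on $W_{\Gamma_1}$. Second, I would confirm the exponents are legal uniformly in dimension: in $d=2$ the embedding $H^1\hookrightarrow L^q$ holds for every finite $q$ and the estimates are merely loose, whereas in $d=3$ the critical exponent $6$ is exactly what forces the $L^3$--$L^6$ pairings above, so it is the three-dimensional case that pins down the constants $C_1,\dots,C_6$ and that I would treat as the binding one.
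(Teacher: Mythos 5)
Your proposal is correct, but there is little in the paper to compare it against: the paper's ``proof'' of Lemma \ref{l1} consists entirely of a citation to Lemma 1 of \cite{Fiordilino}, so you have supplied the self-contained argument that the paper outsources. Your route is the standard one and it works. Integration by parts kills the boundary term using the trace of $u \in X$ alone, so the identity for $b^{\ast}$ indeed needs no condition on $T,S$, as you note. Your representation choices are exactly the non-mechanical point: the definition must be used for the $C_{2},C_{4}$ bounds (since the divergence term $((\nabla\cdot u)v,w)$ in the identity can only yield a full $\|\nabla u\|$), while the identity must be used for the $C_{5},C_{6}$ bounds (so that $w$, respectively $S$, appears undifferentiated in both surviving terms), with the $L^{3}$--$L^{6}$ H\"older pairing and the interpolation $\|\phi\|_{L^{3}} \leq C \|\phi\|^{1/2}\|\phi\|^{1/2}_{1}$ carrying the binding $d=3$ case. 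One observation of yours deserves emphasis because it is a genuine repair rather than bookkeeping: as literally stated over all of $W$, the bounds involving only $\|\nabla T\|$ or $\|\nabla S\|$ fail --- take $T \equiv 1$, so the right-hand sides of the $C_{3},C_{4}$ bounds vanish while $b^{\ast}(u,1,S) = \frac{1}{2}((\nabla\cdot u)S,1)$ need not --- so the Poincar\'e--Friedrichs upgrade you invoke requires restricting to $W_{\Gamma_{1}}$ (where $|\Gamma_{1}|>0$ makes it valid), which is precisely how the paper always applies the lemma, with test functions $S_{h} \in W_{\Gamma_{1},h}$.
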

The weak formulation of system (\ref{s2}) - (\ref{s2f}) is:
Find $u:[0,t^{\ast}] \rightarrow X$, $p:[0,t^{\ast}] \rightarrow Q$, $T:[0,t^{\ast}] \rightarrow W$ for a.e. $t \in (0,t^{\ast}]$ satisfying for $j = 1,...,J$:
\begin{align}
(u_{t},v) + b(u_,u,v) + Pr(\nabla u,\nabla v) - (p, \nabla \cdot v) &= PrRa(\xi T,v) + (f,v) \; \; \forall v \in X, \\
(q, \nabla \cdot u) &= 0 \; \; \forall q \in Q, \\
(T_{t},S) + b^{\ast}(u,T,S) + (\nabla T,\nabla S) &= (\gamma,S) \; \; \forall S \in W_{\Gamma_{1}}.
\end{align}
\subsection{Finite Element Preliminaries}
Consider a regular, quasi-uniform mesh $\Omega_{h} = \{K\}$ of $\Omega$ with maximum triangle diameter length $h$.  Let $X_{h} \subset X$, $Q_{h} \subset Q$, $\hat{W_{h}} = (W_{h},W_{\Gamma_{1},h}) \subset (W,W_{\Gamma_{1}}) = \hat{W}$ be conforming finite element spaces consisting of continuous piecewise polynomials of degrees \textit{j}, \textit{l}, and \textit{j}, respectively.  Moreover, assume they satisfy the following approximation properties $\forall 1 \leq j,l \leq k,m$:
\begin{align}
\inf_{v_{h} \in X_{h}} \Big\{ \| u - v_{h} \| + h\| \nabla (u - v_{h}) \| \Big\} &\leq Ch^{k+1} \lvert u \rvert_{k+1}, \label{a1}\\
\inf_{q_{h} \in Q_{h}}  \| p - q_{h} \| &\leq Ch^{m} \lvert p \rvert_{m}, \label{a2}\\
\inf_{S_{h} \in \hat{W_{h}}}  \Big\{ \| T - S_{h} \| + h\| \nabla (T - S_{h}) \| \Big\} &\leq Ch^{k+1} \lvert T \rvert_{k+1}, \label{a3}
\end{align}
for all $u \in X \cap H^{k+1}(\Omega)^{d}$, $p \in Q \cap H^{m}(\Omega)$, and $T \in \hat{W} \cap H^{k+1}(\Omega)$.  Furthermore, we consider those spaces for which the discrete inf-sup condition is satisfied,
\begin{equation} \label{infsup} 
\inf_{q_{h} \in Q_{h}} \sup_{v_{h} \in X_{h}} \frac{(q_{h}, \nabla \cdot v_{h})}{\| q_{h} \| \| \nabla v_{h} \|} \geq \beta > 0,
\end{equation}
\noindent where $\beta$ is independent of $h$.  Examples include the MINI-element, Taylor-Hood, and non-conforming Crouzeix-Raviart elements \cite{John}.  The space of discretely divergence free functions is defined by 
\begin{align*}
V_{h} := \{v_{h} \in X_{h} : (q_{h}, \nabla \cdot v_{h}) = 0, \forall q_{h} \in Q_{h}\}.
\end{align*}
The space $V_{h}^{\ast}$, dual to $V_{h}$, is endowed with the following dual norm
\begin{align*}
	\| w \|_{V_{h}^{\ast}} := \sup_{v_{h} \in V_{h}}\frac{(w,v_{h})}{\|\nabla v_{h}\|}.
\end{align*}
The discrete inf-sup condition implies that we may approximate functions in $V$ well by functions in $V_{h}$,
\begin{lemma} \label{l5}
	Suppose the discrete inf-sup condition (\ref{infsup}) holds, then for any $v \in V$
	\begin{equation*}
	\inf_{v_{h} \in V_{h}} \| \nabla (v - v_{h}) \| \leq C(\beta)\inf_{v_{h} \in X_{h}} \| \nabla (v - v_{h}) \|.
	\end{equation*}
\end{lemma}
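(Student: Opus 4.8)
The plan is to prove this by an explicit divergence-correction argument: starting from a near-best approximation in $X_h$, I would modify it by a term that cancels its discrete divergence, landing in $V_h$ while controlling the extra error through the inf-sup constant. Fix $v \in V$ and let $\tilde{v}_h \in X_h$ be arbitrary. Since $v \in V$ and $Q_h \subset Q$, we have $(q_h, \nabla \cdot v) = 0$ for all $q_h \in Q_h$, so the discretely-divergence-free defect of $\tilde{v}_h$ is attributable entirely to $\tilde{v}_h - v$; this observation is what ultimately ties the correction to the approximation error.

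First I would invoke the discrete inf-sup condition (\ref{infsup}) to solve an auxiliary discrete divergence problem: find $w_h \in X_h$ satisfying $(q_h, \nabla \cdot w_h) = (q_h, \nabla \cdot \tilde{v}_h)$ for all $q_h \in Q_h$, together with the stability estimate $\|\nabla w_h\| \leq \beta^{-1} \sup_{q_h \in Q_h} (q_h, \nabla \cdot \tilde{v}_h)/\|q_h\|$. This is the heart of the argument and the step in which $\beta$ enters the final constant: (\ref{infsup}) guarantees that the discrete divergence operator is surjective onto $Q_h$ and admits a bounded right inverse, which furnishes $w_h$ with the claimed bound.

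Next I would set $v_h := \tilde{v}_h - w_h$. By construction $(q_h, \nabla \cdot v_h) = (q_h, \nabla \cdot \tilde{v}_h) - (q_h, \nabla \cdot w_h) = 0$ for all $q_h \in Q_h$, so $v_h \in V_h$. To estimate the error I would use the triangle inequality, $\|\nabla(v - v_h)\| \leq \|\nabla(v - \tilde{v}_h)\| + \|\nabla w_h\|$, and then bound the supremum in the stability estimate. Here I would exploit $(q_h, \nabla \cdot v) = 0$ to rewrite $(q_h, \nabla \cdot \tilde{v}_h) = (q_h, \nabla \cdot (\tilde{v}_h - v))$, and apply Cauchy--Schwarz together with $\|\nabla \cdot w\| \leq \sqrt{d}\,\|\nabla w\|$ to get $\|\nabla w_h\| \leq (\sqrt{d}/\beta)\|\nabla(v - \tilde{v}_h)\|$. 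Combining yields $\|\nabla(v - v_h)\| \leq (1 + \sqrt{d}/\beta)\|\nabla(v - \tilde{v}_h)\|$, and taking the infimum over all $\tilde{v}_h \in X_h$ gives the claim with $C(\beta) = 1 + \sqrt{d}/\beta$.

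The only delicate point is the solvability-with-bound step; everything afterward is the triangle inequality and Cauchy--Schwarz. I expect the main obstacle to be stating cleanly why (\ref{infsup}) produces $w_h$ with the stated bound. One typically either cites the standard result that the inf-sup condition makes $B_h := -\nabla \cdot \colon X_h \to Q_h$ surjective with a right inverse bounded by $\beta^{-1}$, or reproduces the short functional-analytic argument establishing it; I would favor the former to keep the proof self-contained yet brief.
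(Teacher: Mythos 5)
Your proposal is correct: the paper itself gives no argument here, simply citing Chapter 2, Theorem 1.1 of Girault--Raviart, and your divergence-correction construction (solve $(q_h,\nabla\cdot w_h)=(q_h,\nabla\cdot(\tilde v_h - v))$ via the inf-sup condition with $\|\nabla w_h\|\leq \beta^{-1}\sup_{q_h}(q_h,\nabla\cdot(\tilde v_h-v))/\|q_h\|$, set $v_h=\tilde v_h-w_h\in V_h$, and conclude with the triangle inequality and $\|\nabla\cdot w\|\leq\sqrt{d}\,\|\nabla w\|$) is precisely the standard proof behind that citation, yielding the valid constant $C(\beta)=1+\sqrt{d}/\beta$. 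The one step you rightly flag as delicate --- that (\ref{infsup}) makes the discrete divergence operator surjective onto $Q_h$ with a right inverse bounded by $\beta^{-1}$ --- is exactly the content of the cited theorem, so deferring to it is appropriate.
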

\begin{proof}
	See Chapter 2, Theorem 1.1 on p. 59 of \cite{Girault}.
\end{proof}
We will also assume that the mesh and finite element spaces satisfy the standard inverse inequality \cite{Ern}:
$$\| \nabla \chi_{1,2} \| \leq C_{inv,1,2} h^{-1} \| \chi_{1,2} \| \; \; \; \forall \chi_{1} \in X_{h}, \; \forall \chi_{2} \in W_{\Gamma_{1},h},$$ 
\noindent where $C_{inv,1,2}$ depends on the minimum angle $\alpha_{min}$ in the triangulation.  A discrete Gronwall inequality will play a  role in the upcoming analysis.
\begin{lemma} \label{l4}
(Discrete Gronwall Lemma). Let $\Delta t$, H, $a_{n}$, $b_{n}$, $c_{n}$, and $d_{n}$ be finite nonnegative numbers for n $\geq$ 0 such that for N $\geq$ 1
\begin{align*}
a_{N} + \Delta t \sum^{N}_{0}b_{n} &\leq \Delta t \sum^{N-1}_{0} d_{n}a_{n} + \Delta t \sum^{N}_{0} c_{n} + H,
\end{align*}
then for all  $\Delta t > 0$ and N $\geq$ 1
\begin{align*}
a_{N} + \Delta t \sum^{N}_{0}b_{n} &\leq exp\big(\Delta t \sum^{N-1}_{0} d_{n}\big)\big(\Delta t \sum^{N}_{0} c_{n} + H\big).
\end{align*}
\end{lemma}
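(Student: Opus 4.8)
The plan is to prove the inequality by strong induction on $N$, exploiting the crucial structural feature that $a_N$ itself does not appear on the right-hand side: the weighted sum $\Delta t \sum_0^{N-1} d_n a_n$ terminates at index $N-1$. This is precisely what permits the conclusion to hold for \emph{every} $\Delta t > 0$ with no smallness (CFL-type) restriction on the products $\Delta t\, d_n$. Had the sum instead run to $N$, the term $a_N$ would appear on both sides and one would be forced to assume $\Delta t\, d_N < 1$ in order to absorb it; the truncation at $N-1$ is exactly the hypothesis that makes this the convenient, unconditional-in-$\Delta t$ form of Gronwall for the later stability and error arguments (Theorems \ref{t1} and \ref{error:thick}).

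First I would set $G_N := \Delta t \sum_{n=0}^N c_n + H$ and observe that, since each $c_n \ge 0$, the sequence $G_N$ is nondecreasing, so $G_n \le G_N$ whenever $n \le N$. Because $b_n \ge 0$, the hypothesis yields in particular $a_n \le \Delta t \sum_{k=0}^{n-1} d_k a_k + G_n$ for each $n$. Rather than inducting directly on the exponential bound, I would first establish the sharper product form
\begin{align*}
a_N + \Delta t \sum_{n=0}^N b_n \le \Big[ \prod_{n=0}^{N-1}(1 + \Delta t\, d_n) \Big] G_N,
\end{align*}
taking the base case directly from the hypothesis (the empty weighted sum giving $a_0 \le G_0$, and the empty product equal to $1$). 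For the induction step I would insert the inductive bounds $a_n \le \big[\prod_{m=0}^{n-1}(1+\Delta t\, d_m)\big]G_n \le \big[\prod_{m=0}^{n-1}(1+\Delta t\, d_m)\big]G_N$ into $\Delta t\sum_{n=0}^{N-1} d_n a_n + G_N$ and factor out $G_N$.

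The induction step then collapses to the elementary telescoping identity
\begin{align*}
1 + \Delta t \sum_{n=0}^{N-1} d_n \prod_{m=0}^{n-1}(1+\Delta t\, d_m) = \prod_{n=0}^{N-1}(1 + \Delta t\, d_n),
\end{align*}
which itself follows by a one-line induction, since each summand on the left is the forward difference $\prod_{m=0}^{n}(1+\Delta t\, d_m) - \prod_{m=0}^{n-1}(1+\Delta t\, d_m)$. Finally I would pass from the product to the stated exponential bound using $1 + x \le e^x$ for $x \ge 0$, which gives $\prod_{n=0}^{N-1}(1+\Delta t\, d_n) \le \exp\big(\Delta t \sum_{n=0}^{N-1} d_n\big)$ and hence the claim.

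The only genuine hurdle is the index bookkeeping: keeping the ranges of the nested sums and products aligned through the induction so that the telescoping identity applies cleanly. The substantive point worth isolating, however, is the structural remark in the first paragraph — the truncation of the weighted sum at $N-1$ is what removes any constraint on $\Delta t$ and drives the whole argument, the remaining manipulations being routine nonnegativity estimates and the convexity bound $1+x\le e^x$.
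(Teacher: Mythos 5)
Your proof is correct in substance, but it takes a genuinely different route from the paper for the simple reason that the paper offers no argument at all: its entire proof is the citation ``See Lemma 5.1 on p.~369 of \cite{Heywood}.'' You have reconstructed, essentially in full, the standard argument behind that cited result: the sharp product bound $a_N + \Delta t\sum_{n=0}^{N}b_n \le \big[\prod_{n=0}^{N-1}(1+\Delta t\,d_n)\big]\big(\Delta t\sum_{n=0}^{N}c_n + H\big)$ by strong induction, with the induction step reduced to the telescoping identity $\Delta t\,d_n\prod_{m=0}^{n-1}(1+\Delta t\,d_m)=\prod_{m=0}^{n}(1+\Delta t\,d_m)-\prod_{m=0}^{n-1}(1+\Delta t\,d_m)$, followed by $1+x\le e^x$. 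What the citation buys the paper is brevity; what your proof buys is precisely the structural point you isolate in your first paragraph: because the weighted sum stops at $N-1$, the term $a_N$ never needs to be absorbed from the right-hand side, so no restriction of the form $\Delta t\,d_n<1$ is required --- this is exactly what distinguishes the variant stated here from Heywood--Rannacher's conditional form (where the sum runs to $N$ and the factors $\sigma_\mu=(1-k\gamma_\mu)^{-1}$ enter). Your remark also correctly explains why the paper's timestep restriction (\ref{c1}) originates solely in the nonlinear terms and not in the Gronwall step.

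One bookkeeping point deserves more care than you give it. You take the base case $a_0\le G_0$ ``directly from the hypothesis,'' but as the lemma is printed the hypothesis is assumed only for $N\ge 1$, so $a_0$ is a priori unconstrained --- and with $a_0$ free the stated conclusion is actually false: take $b_n=c_n=0$, $H=\Delta t=d_0=1$, $d_n=0$ for $n\ge 1$, and $a_n=M$ for all $n$ with $M$ large; the hypothesis $a_N\le a_0+1$ holds for every $N\ge1$, yet the conclusion for $N=1$ would force $a_1\le e$. The lemma must be read, as in \cite{Heywood}, with the hypothesis holding for $N\ge 0$ under the empty-sum convention, which supplies exactly your base case $a_0\le \Delta t\,c_0+H$; equivalently, in the paper's applications the initial-level terms are folded into $H$, which plays the same role. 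So you should state that reading explicitly rather than assert it follows from the printed hypothesis. Everything else in your induction --- monotonicity of $G_N$, discarding the nonnegative $b$-sums at the lower levels, the telescoping identity, and the passage to the exponential --- is airtight.
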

\begin{proof}
	See Lemma 5.1 on p. 369 of \cite{Heywood}.
\end{proof}
Lastly, the discrete time analysis will utilize the following norms $\forall \; 1 \leq k \leq \infty$:
\begin{align*}
\vertiii{v}_{\infty,k} &:= \max_{0\leq n \leq N} \| v^{n} \|_{k}, \;
\vertiii{v}_{p,k} := \big(\Delta t \sum^{N}_{n = 0} \| v^{n} \|^{p}_{k}\big)^{1/p}.
\end{align*}
%%%%%%%%%%%%%%%%%%%%%%%%%%%%%%%%%%%%%%%%
%%%%%%%%%%%%%%%%%%%%%%%%%%%%%%%%%%%%%%%%
\section{Numerical Scheme}
Denote the fully discrete solutions by $u^{n}_{h}$, $p^{n}_{h}$, and $T^{n}_{h}$ at time levels $t^{n} = n\Delta t$, $n = 1,2,...,N$, and $t^{\ast}=N\Delta t$.  Given $(u^{n-1}_{h}, p^{n-1}_{h}, T^{n-1}_{h})$ and $(u^{n}_{h}, p^{n}_{h}, T^{n}_{h})$ $\in (X_{h},Q_{h},W_{h})$, find $(u^{n+1}_{h}, p^{n+1}_{h}, T^{n+1}_{h})$ $\in (X_{h},Q_{h},W_{h})$ satisfying, for every $n = 1,2,...,N$, the fully discrete approximation of (\ref{s2}) - (\ref{s2f})
\begin{multline}\label{scheme:one:velocity}
(\frac{3u^{n+1}_{h} - 4u^{n}_{h} + u^{n-1}_{h}}{2\Delta t},v_{h}) + b(<u_{h}>^{n}_{e},u^{n+1}_{h},v_{h}) + b({u'}^{n}_{h},2u^{n}_{h} - u^{n-1}_{h},v_{h}) \\ + Pr(\nabla u^{n+1}_{h},\nabla v_{h}) - (p^{n+1}_{h}, \nabla \cdot v_{h}) =  PrRa(\xi (2T^{n}_{h}-T^{n-1}_{h}),v_{h}) + (f^{n+1},v_{h}) \; \; \forall v_{h} \in X_{h},
\end{multline}
\begin{equation}
(q_{h}, \nabla \cdot u^{n+1}_{h}) = 0 \; \; \forall q_{h} \in Q_{h},
\end{equation}
\begin{multline}\label{scheme:one:temperature}
(\frac{3T^{n+1}_{h} - 4T^{n}_{h} + T^{n-1}_{h}}{2\Delta t},S_{h}) + b^{\ast}(<u_{h}>^{n}_{e},T^{n+1}_{h},S_{h}) + b^{\ast}({u'}^{n}_{h},2T^{n}_{h} - T^{n-1}_{h},S_{h}) \\ + (\nabla T^{n+1}_{h},\nabla S_{h})  = (\gamma^{n+1},S_{h}) \; \; \forall S_{h} \in W_{\Gamma_{1},h}.
\end{multline}
\textbf{Remark:} To ensure second order accuracy of the method, the first iterate should be computed with a second order method such as the trapezoidal rule.\\
\textbf{Remark:} The treatment of the nonlinear terms in the time discretization (\ref{d1}) - (\ref{d2}) leads to a shared coefficient matrix, in the above, independent of the ensemble members.
\section{Numerical Analysis of the Ensemble Algorithm}
We present stability results for the aforementioned algorithm under the following timestep condition:
\begin{align}
\frac{C_{\dagger} \Delta t}{h} \max_{1 \leq j \leq J} \|\nabla {u'}^{n}_{h}\|^{2} \leq 1,\label{c1}
\end{align}
\noindent where $C_{\dagger} \equiv C_{\dagger}(|\Omega|, \alpha_{min},Pr)$.
In Theorem \ref{t1}, the nonlinear stability of the velocity, temperature, and pressure approximations are proven under condition \ref{c1} for the the scheme (\ref{scheme:one:velocity}) - (\ref{scheme:one:temperature}).  
%%%%%%%%%%%%%%%%%%%%%%%%%%%%%%%%%%%%
%%%%%%%%%%%%%%%%%%%%%%%%%%%%%%%%%%%%
\subsection{Stability Analysis}
\begin{theorem} \label{t1}
Suppose $f \in L^{\infty}(0,t^{\ast};H^{-1}(\Omega)^{d})$, $\gamma \in L^{\infty}(0,t^{\ast};H^{-1}(\Omega))$.  If the scheme (\ref{scheme:one:velocity}) - (\ref{scheme:one:temperature}) satisfies Condition \ref{c1}, then
\begin{multline}
\frac{1}{2} \|T^{N}_{h}\|^{2} + \frac{1}{2} \|2T^{N}_{h} - T^{N-1}_{h}\|^{2} + \|u^{N}_{h}\|^{2} + \|2u^{N}_{h} - u^{N-1}_{h}\|^{2} + \frac{1}{2} \sum_{n = 1}^{N-1} \|T^{n+1}_{h} - 2T^{n}_{h} + T^{n-1}_{h}\|^{2} 
\\ + \frac{1}{2} \sum_{n = 1}^{N-1} \|u^{n+1}_{h} - 2u^{n}_{h} + u^{n-1}_{h}\|^{2} + \frac{\Delta t}{2} \sum_{n = 1}^{N-1} \| \nabla T^{n+1}_{h} \|^{2} + Pr \Delta t \sum_{n = 1}^{N-1} \| \nabla u^{n+1}_{h} \|^{2}
\\ \leq exp(C t^{\ast}) \big\{\Delta t \sum_{n = 1}^{N-1} \Big( \frac{6}{Pr} \| f^{n+1} \|_{-1}^{2} + 4 \| \gamma^{n+1} \|_{-1}^{2} + {8 C_{I}^{2} C{tr}^{2} |\Gamma_{N}|} 
\\ + {6PrRa^{2}C_{PF,1}^2C_{I}^{2}C{tr}^{2} |\Gamma_{N}|}\Big) + 2\|T^{1}_{h}\|^{2} + 2\|2T^{1}_{h} - T^{0}_{h}\|^{2} + \|u^{1}_{h}\|^{2} + \|2u^{1}_{h} - u^{0}_{h}\|^{2} \big\}
\\ + C_{I}^{2}C_{tr}^{2}|\Gamma_{N}|\Big(2 + t^{\ast} + 4exp(C t^{\ast})\Big).
\end{multline}
\noindent Moreover,
\begin{multline}
\beta \Delta t \sum^{N-1}_{n=1} \| p^{n+1}_{h}\| \leq 2 \Big\{C_{1} \Delta t \| \nabla <u_{h}>^{n}_{e} \| \| \nabla u^{n+1}_{h} \| + \frac{2 C_{1} h}{C_{\dagger}} \Big( \| \nabla  u^{n}_{h} \| + \| \nabla  u^{n-1}_{h} \| \Big)
\\ + Pr \Delta t \|\nabla u^{n+1}_{h} \| +  2PrRaC_{PF,1} \Delta t \Big(\|2T^{n}_{h}-T^{n-1}_{h} \| + \frac{3C_{I} C_{tr} |\Gamma_{N}|^{1/2}}{2} \Big) + \Delta t \|f^{n+1} \|_{-1}\Big\}.
\end{multline}
\end{theorem}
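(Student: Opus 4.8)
The plan is to run a combined discrete energy argument on the momentum and temperature equations and then recover the pressure from the inf-sup condition. First I would test (\ref{scheme:one:velocity}) with $v_{h} = u^{n+1}_{h}$: since $u^{n+1}_{h} \in V_{h}$ the pressure term $(p^{n+1}_{h}, \nabla \cdot u^{n+1}_{h})$ drops, and by skew-symmetry of $b$ the ensemble-mean advection $b(<u_{h}>^{n}_{e}, u^{n+1}_{h}, u^{n+1}_{h})$ vanishes. For the temperature I would homogenize the boundary data by fixing a time-independent discrete extension $\tau_{h} \in W_{h}$ of $T\rvert_{\Gamma_{N}}$ satisfying $\|\nabla \tau_{h}\| \leq C_{I}\|\tau\|_{1} \leq C_{I}C_{tr}|\Gamma_{N}|^{1/2}$, setting $\hat{T}^{n}_{h} := T^{n}_{h} - \tau_{h} \in W_{\Gamma_{1},h}$, and testing (\ref{scheme:one:temperature}) with $S_{h} = \hat{T}^{n+1}_{h}$. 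Because $3-4+1 = 0$, the BDF2 difference is unchanged by subtracting the constant-in-time $\tau_{h}$, so the discrete time derivative becomes the clean three-term difference in $\hat{T}_{h}$, while $b^{\ast}(<u_{h}>^{n}_{e}, T^{n+1}_{h}, \hat{T}^{n+1}_{h})$ reduces to $b^{\ast}(<u_{h}>^{n}_{e}, \tau_{h}, \hat{T}^{n+1}_{h})$. To each three-term difference I would apply the BDF2 G-stability identity
\[
(3a^{n+1}-4a^{n}+a^{n-1}, a^{n+1}) = \frac{1}{2}\big(\|a^{n+1}\|^{2} + \|2a^{n+1}-a^{n}\|^{2} - \|a^{n}\|^{2} - \|2a^{n}-a^{n-1}\|^{2} + \|a^{n+1}-2a^{n}+a^{n-1}\|^{2}\big),
\]
and add the two tested identities with weights chosen to balance the buoyancy coupling, which produces the telescoping norms and the second-difference terms on the left-hand side of the theorem.

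The crux is controlling the fluctuation advection $b({u'}^{n}_{h}, 2u^{n}_{h}-u^{n-1}_{h}, u^{n+1}_{h})$ under the stated $h^{-1}$ (rather than $h^{-2}$) CFL condition. Writing $2u^{n}_{h}-u^{n-1}_{h} = u^{n+1}_{h} - D$ with $D := u^{n+1}_{h}-2u^{n}_{h}+u^{n-1}_{h}$, skew-symmetry annihilates $b({u'}^{n}_{h}, u^{n+1}_{h}, u^{n+1}_{h})$ and, after re-skewing the remaining arguments, leaves $b({u'}^{n}_{h}, u^{n+1}_{h}, D)$ with the second difference $D$ in the last slot. I would bound this by $C_{5}\|\nabla{u'}^{n}_{h}\|\|\nabla u^{n+1}_{h}\|\sqrt{\|D\|\|\nabla D\|}$ from Lemma \ref{l1} and then use the inverse inequality $\|\nabla D\|\leq C_{inv,1}h^{-1}\|D\|$ to produce a single factor $h^{-1/2}$. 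After multiplying the equation by $\Delta t$, a Young's inequality absorbs $D$ into $\frac{1}{2}\|u^{n+1}_{h}-2u^{n}_{h}+u^{n-1}_{h}\|^{2}$ on the left, leaving a remainder of size $C(\Delta t)^{2}h^{-1}\|\nabla{u'}^{n}_{h}\|^{2}\|\nabla u^{n+1}_{h}\|^{2}$. This is absorbable into the viscous dissipation $Pr\,\Delta t\|\nabla u^{n+1}_{h}\|^{2}$ exactly when $\frac{C_{\dagger}\Delta t}{h}\|\nabla{u'}^{n}_{h}\|^{2}\leq 1$, which both justifies (\ref{c1}) and fixes $C_{\dagger}$ through $C_{5}$, $C_{inv,1}$, and $Pr$. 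The temperature fluctuation $b^{\ast}({u'}^{n}_{h}, 2T^{n}_{h}-T^{n-1}_{h}, \hat{T}^{n+1}_{h})$ is treated identically using $C_{6}$ and the thermal dissipation, while the boundary pieces $b^{\ast}({u'}^{n}_{h}, \tau_{h}, \hat{T}^{n+1}_{h})$ and $b^{\ast}(<u_{h}>^{n}_{e}, \tau_{h}, \hat{T}^{n+1}_{h})$ are bounded with $C_{3}$/$C_{6}$ and $\|\nabla\tau_{h}\|\leq C_{I}C_{tr}|\Gamma_{N}|^{1/2}$. I expect this absorption step to be the main obstacle, since the exact pairing of $b$-estimate, half inverse inequality, and Young's inequality must be chosen to reproduce the $h^{-1}$ scaling of (\ref{c1}) rather than a more restrictive $h^{-2}$ one.

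The remaining terms are routine. For the buoyancy I would split $2T^{n}_{h}-T^{n-1}_{h} = (2\hat{T}^{n}_{h}-\hat{T}^{n-1}_{h}) + \tau_{h}$ (using $2-1=1$) and bound $PrRa(\xi(2T^{n}_{h}-T^{n-1}_{h}), u^{n+1}_{h})$ with the Poincar\'e--Friedrichs constant $C_{PF,1}$ and Young's inequality, absorbing $\|\nabla u^{n+1}_{h}\|^{2}$ into the viscous term and producing both the forcing contribution $6PrRa^{2}C_{PF,1}^{2}C_{I}^{2}C_{tr}^{2}|\Gamma_{N}|$ from the $\tau_{h}$ part and a coupling term $\|2\hat{T}^{n}_{h}-\hat{T}^{n-1}_{h}\|^{2}$ reserved for Gronwall. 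The source terms $(f^{n+1},u^{n+1}_{h})$ and $(\gamma^{n+1},\hat{T}^{n+1}_{h})$ are bounded by the dual norms $\|\cdot\|_{-1}$ and Young's inequality, and the diffusion cross term $-(\nabla T^{n+1}_{h},\nabla\tau_{h})$ contributes $\|\nabla\tau_{h}\|^{2}\leq C_{I}^{2}C_{tr}^{2}|\Gamma_{N}|$ while leaving $\frac{\Delta t}{2}\|\nabla T^{n+1}_{h}\|^{2}$ on the left; together these give the $f$, $\gamma$, and $C_{I}^{2}C_{tr}^{2}|\Gamma_{N}|$ forcing terms. Summing over $n=1,\dots,N-1$ telescopes the G-norms, and I would apply the discrete Gronwall lemma (Lemma \ref{l4}) with $a_{n}$ the combined G-norm and $d_{n}a_{n}$ the buoyancy coupling term to obtain the $exp(Ct^{\ast})$ factor. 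A final triangle inequality converting $\|\hat{T}^{n}_{h}\|$ back to $\|T^{n}_{h}\|$ yields the standalone $C_{I}^{2}C_{tr}^{2}|\Gamma_{N}|(2+t^{\ast}+4exp(Ct^{\ast}))$ contribution.

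For the pressure bound I would isolate $(p^{n+1}_{h},\nabla\cdot v_{h})$ from (\ref{scheme:one:velocity}) for arbitrary $v_{h}\in X_{h}$, bounding the time-derivative, ensemble-mean advection (via $C_{1}$ and the established velocity bounds), fluctuation advection (via $C_{1}$ together with (\ref{c1})), viscous, buoyancy ($C_{PF,1}$, $C_{I}$, $C_{tr}$), and forcing terms in turn. Dividing by $\|\nabla v_{h}\|$, taking the supremum over $v_{h}$, and invoking the inf-sup condition (\ref{infsup}) gives the stated bound on $\beta\Delta t\sum_{n}\|p^{n+1}_{h}\|$; the factor $\frac{2C_{1}h}{C_{\dagger}}$ multiplying $\|\nabla u^{n}_{h}\|+\|\nabla u^{n-1}_{h}\|$ arises from using (\ref{c1}) to trade the fluctuation gradient $\Delta t\|\nabla{u'}^{n}_{h}\|$ for a multiple of $h/C_{\dagger}$.
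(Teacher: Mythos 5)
Your proposal is correct and follows essentially the same route as the paper's proof: the decomposition $T^{n}_{h} = \theta^{n}_{h} + I_{h}\tau$ with a discrete extension of the boundary data, the BDF2 G-stability (polarization) identity, skew-symmetry to place the second difference in the last slot of the fluctuation trilinear terms followed by the $C_{5}/C_{6}$ bounds, the half-power inverse inequality and Young's inequality so that condition (\ref{c1}) absorbs the $\Delta t^{2}h^{-1}$ remainder into the dissipation, the buoyancy splitting, discrete Gronwall, and the inf-sup recovery of the pressure. The only cosmetic deviation is your gradient-based ($C_{3}$/$C_{6}$) treatment of the $\tau_{h}$ advection terms, where the paper instead uses an asymmetric Cauchy--Schwarz/Poincar\'{e}--Friedrichs estimate yielding a Gronwall-ready $\|u^{n}_{h}\|^{2}$; with your tools the same effect is achieved by the $\sqrt{\|\cdot\|\|\nabla\cdot\|}$ variants plus a two-parameter Young splitting, so the argument goes through as sketched.
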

\begin{proof}
Let $T^{n+1}_{h} = \theta^{n+1}_{h} + I_{h}\tau$, where $I_{h}\tau$ is an interpolant of $\tau$ satisfying $\|I_{h}\tau\|_{1} \leq C_{I} \|\tau\|_{1}$.  Add equations (\ref{scheme:one:velocity}) and (\ref{scheme:one:temperature}), let $(v_{h},q_{h},S_{h}) = (u^{n+1}_{h},p^{n+1}_{h},\theta^{n+1}_{h}) \in (V_{h},Q_{h},W_{\Gamma_{1},h})$ and use the polarization identity.  Then, 
\begin{multline}\label{stability:thicku}
\frac{1}{4 \Delta t} \Big\{\|\theta^{n+1}_{h}\|^{2} + \|2\theta^{n+1}_{h} - \theta^{n}_{h}\|^{2}\Big\} - \frac{1}{4 \Delta t} \Big\{\|\theta^{n}_{h}\|^{2} + \|2\theta^{n}_{h} - \theta^{n-1}_{h}\|^{2}\Big\} + \frac{1}{4 \Delta t} \|\theta^{n+1}_{h} - 2\theta^{n}_{h} + \theta^{n-1}_{h}\|^{2}
\\ + \frac{1}{4 \Delta t} \Big\{\|u^{n+1}_{h}\|^{2} + \|2u^{n+1}_{h} - u^{n}_{h}\|^{2}\Big\} - \frac{1}{4 \Delta t} \Big\{\|u^{n}_{h}\|^{2} + \|2u^{n}_{h} - u^{n-1}_{h}\|^{2}\Big\} + \frac{1}{4 \Delta t} \|u^{n+1}_{h} - 2u^{n}_{h} + u^{n-1}_{h}\|^{2}
\\ + \|\nabla \theta^{n+1}_{h}\|^{2} + (\nabla I_{h}\tau,\nabla \theta^{n+1}_{h}) + Pr \|\nabla u^{n+1}_{h}\|^{2} + b({u'}^{n}_{h},2u^{n}_{h} - u^{n-1}_{h},u^{n+1}_{h}) 
\\ + b^{\ast}({u'}^{n}_{h},2\theta^{n}_{h} - \theta^{n-1}_{h},\theta^{n+1}_{h}) = PrRa(\gamma (2\theta^{n}_{h}-\theta^{n-1}_{h} + I_{h}\tau), u^{n+1}_{h}) - b^{\ast}(u^{n}_{h},I_{h}\tau,\theta^{n+1}_{h})
\\ + (f^{n+1},u^{n+1}_{h}) + (\gamma^{n+1},T^{n+1}_{h}).
\end{multline}
Multiply by $\Delta t$, consider $-\Delta t (\nabla I_{h}\tau,\nabla \theta^{n+1}_{h})$ and $\Delta t PrRa(\xi I_{h}\tau, u^{n+1}_{h})$.  Use the Cauchy-Schwarz-Young inequality, interpolation estimates and note that $\| \xi \| = 1$,
\begin{align}
-\Delta t (\nabla I_{h}\tau,\nabla \theta^{n+1}_{h}) &\leq {2\Delta t} \|I_{h}\tau\|^{2}_{1} + \frac{\Delta t}{8} \|\nabla \theta^{n+1}_{h}\|^{2} \leq {2 C_{I}^{2} \Delta t} \|\tau\|^{2}_{1} + \frac{\Delta t}{8} \|\nabla \theta^{n+1}_{h}\|^{2}\label{stability:thick:estinttau}
\\ &\leq {2 C_{I}^{2} C{tr}^{2} |\Gamma_{N}| \Delta t} + \frac{\Delta t}{8} \|\nabla \theta^{n+1}_{h}\|^{2}, \notag
\\ \Delta t PrRa(\xi I_{h}\tau, u^{n+1}_{h}) &\leq \frac{\Delta t Pr^{2}Ra^{2}C_{PF,1}^2C_{I}^{2}C{tr}^{2} |\Gamma_{N}|}{2 \epsilon_{2}} + \frac{\Delta t \epsilon_{2}}{2} \| \nabla u^{n+1}_{h} \|^{2}. \label{stability:thick:esttau}
\end{align}
Use the Cauchy-Schwarz-Young inequality on $\Delta t (\gamma^{n+1},\theta^{n}_{h})$, $\Delta t PrRa(\xi (2\theta^{n}_{h}-\theta^{n-1}_{h}), u^{n+1}_{h})$, and \\ $\Delta t (f^{n+1},u^{n+1}_{h})$.  Then,
\begin{align}
\Delta t (\gamma^{n+1},\theta^{n+1}_{h}) &\leq {2\Delta t} \|\gamma^{n+1}\|^{2}_{-1} + \frac{\Delta t}{8} \|\nabla \theta^{n+1}_{h}\|^{2}, \label{stability:thick:estg}
\\ \Delta t PrRa(\xi (2\theta^{n}_{h}-\theta^{n-1}_{h}), u^{n+1}_{h}) &\leq \frac{\Delta t Pr^{2}Ra^{2}C_{PF,1}^2}{2 \epsilon_{1}} \|2\theta^{n}_{h}-\theta^{n-1}_{h} \|^{2} + \frac{\Delta t \epsilon_{1}}{2} \| \nabla u^{n+1}_{h} \|^{2}, \label{stability:thick:estT}
\\ \Delta t (f^{n+1},u^{n+1}_{h}) &\leq \frac{\Delta t}{2 \epsilon_{3}} \|f^{n+1} \|^{2}_{-1} + \frac{\Delta t \epsilon_{3}}{2} \| \nabla u^{n+1}_{h} \|^{2}. \label{stability:thick:estf}
\end{align}
Consider $-\Delta t b^{\ast}({u'}^{n}_{h},2\theta^{n}_{h} - \theta^{n-1}_{h},\theta^{n+1}_{h})$ and $\Delta t b({u'}^{n}_{h},2u^{n}_{h}-u^{n-1}_{h},u^{n+1}_{h})$.  Use skew-symmetry, Lemma \ref{l1}, the inverse inequality, and the Cauchy-Schwarz-Young inequality.  Then,
\begin{align}
-\Delta t b^{\ast}({u'}^{n}_{h},2\theta^{n}_{h} - \theta^{n-1}_{h},\theta^{n+1}_{h}) &= -\Delta t b^{\ast}({u'}^{n}_{h},\theta^{n+1}_{h},\theta^{n+1}_{h} - 2\theta^{n}_{h} + \theta^{n-1}_{h})  \label{stability:thick:estbstar}\\
&\leq \Delta t C_{6} \|\nabla {u'}^{n}_{h}\| \|\nabla \theta^{n+1}_{h}\| \sqrt{\|\theta^{n+1}_{h} - 2\theta^{n}_{h} + \theta^{n-1}_{h}\| \| \nabla (\theta^{n+1}_{h} - 2\theta^{n}_{h} + \theta^{n-1}_{h})\|} \notag
\\ &\leq  \frac{\Delta t C_{6} C^{1/2}_{inv,2}}{h^{1/2}} \|\nabla {u'}^{n}_{h}\| \|\nabla \theta^{n+1}_{h}\| \|\theta^{n+1}_{h} - 2\theta^{n}_{h} + \theta^{n-1}_{h}\| \notag
\\ &\leq \frac{2\Delta t^2 C_{6}^2 C_{inv,2}}{h} \|\nabla {u'}^{n}_{h}\|^{2} \|\nabla \theta^{n+1}_{h}\|^{2} + \frac{1}{8} \|\theta^{n+1}_{h} - 2\theta^{n}_{h} + \theta^{n-1}_{h}\|^{2}, \notag
\\ -\Delta t b({u'}^{n}_{h},2u^{n}_{h} - u^{n-1}_{h},u^{n+1}_{h}) &\leq \frac{2 \Delta t^{2} C_{5}^{2} C_{inv,1}}{h} \|\nabla {u'}^{n}_{h}\|^{2} \|\nabla u^{n+1}_{h}\|^{2}\notag
\\ &+ \frac{1}{8} \|u^{n+1}_{h} - 2u^{n}_{h} + u^{n-1}_{h}\|^{2}.\label{stability:thick:estb}
\end{align}
Use the Cauchy-Schwarz-Young and Poincar\'{e}-Friedrichs inequalities on $-\Delta t b^{\ast}(u^{n}_{h},I_{h}\tau,\theta^{n+1}_{h})$,
\begin{align}
-\Delta t b^{\ast}(u^{n}_{h},I_{h}\tau,\theta^{n+1}_{h}) &\leq \frac{1}{2} \| u^{n}_{h} \cdot \nabla I_{h} \tau \| \| \theta^{n+1}_{h} \| + \frac{1}{2} \| u^{n}_{h} \cdot \nabla \theta^{n+1}_{h} \| \| I_{h} \tau \| \label{stability:thick:estcouplingtau}
\\ &\leq \frac{(1 + C_{PF_{2}}C_{I}^{2}) C{tr}^{2} |\Gamma_{N}|\Delta t}{4 \epsilon_{4}} \| u^{n}_{h} \|^{2} + \frac{\epsilon_{4}}{4} \| \nabla \theta^{n+1}_{h} \|^{2}.\notag
\end{align}
\noindent Let $\epsilon_{1} = \epsilon_{2} = \epsilon_{3} = Pr/3$ and $\epsilon_{4} = 1$.  Using (\ref{stability:thick:estinttau}) - (\ref{stability:thick:estcouplingtau}) in (\ref{stability:thicku}) leads to
\begin{multline}
\frac{1}{4} \Big\{\|\theta^{n+1}_{h}\|^{2} + \|2\theta^{n+1}_{h} - \theta^{n}_{h}\|^{2}\Big\} - \frac{1}{4} \Big\{\|\theta^{n}_{h}\|^{2} + \|2\theta^{n}_{h} - \theta^{n-1}_{h}\|^{2}\Big\} + \frac{1}{8} \|\theta^{n+1}_{h} - 2\theta^{n}_{h} + \theta^{n-1}_{h}\|^{2} 
\\ + \frac{1}{4} \Big\{\|u^{n+1}_{h}\|^{2} + \|2u^{n+1}_{h} - u^{n}_{h}\|^{2}\Big\} - \frac{1}{4} \Big\{\|u^{n}_{h}\|^{2} + \|2u^{n}_{h} - u^{n-1}_{h}\|^{2}\Big\} + \frac{1}{8} \|u^{n+1}_{h} - 2u^{n}_{h} + u^{n-1}_{h}\|^{2}
\\ + \frac{\Delta t}{4} \|\nabla \theta^{n+1}_{h}\|^{2} + \frac{Pr \Delta t}{4} \|\nabla u^{n+1}_{h}\|^{2} + \frac{\Delta t}{2} \| \nabla \theta^{n+1}_{h} \|^{2} \big\{ 1- \frac{4 \Delta t C_{6}^2 C_{inv,2}}{h} \| \nabla {u'}^{n}_{h} \|^{2} \big\}
\\ + \frac{Pr \Delta t}{2} \| \nabla u^{n+1}_{h} \|^{2} \big\{ 1- \frac{4 \Delta t C_{5}^2 C_{inv,1}}{Pr h} \| \nabla {u'}^{n}_{h} \|^{2} \big\} \leq \frac{3\Delta tPr Ra^{2} C_{PF,1}^{2}}{2} \| 2\theta^{n}_{h}-\theta^{n-1}_{h} \|^{2}
\\ + \frac{(1 + C_{PF_{2}}C_{I}^{2}) C{tr}^{2} |\Gamma_{N}| \Delta t}{4} \| u^{n}_{h} \|^{2} + {2 C_{I}^{2} C{tr}^{2} |\Gamma_{N}| \Delta t} + \frac{3\Delta t PrRa^{2}C_{PF,1}^2C_{I}^{2}C{tr}^{2} |\Gamma_{N}|}{2}
\\ + \frac{3\Delta t}{2 Pr} \| f^{n+1} \|_{-1}^{2} + {\Delta t} \| \gamma^{n+1} \|_{-1}^{2}.
\end{multline}
\noindent \noindent Use the timestep condition \ref{c1}, multiply by 4, and add both $\|2u^{n}_{h} - u^{n-1}_{h}\|^{2}$ and $\|\theta^{n}_{h}\|^{2}$ to the r.h.s.  Taking a maximum over constants in the first two terms and the added terms on the r.h.s. and summing from $n = 1$ to $n = N-1$ leads to,
\begin{multline}
\|\theta^{N}_{h}\|^{2} + \|2\theta^{N}_{h} - \theta^{N-1}_{h}\|^{2} + \|u^{N}_{h}\|^{2} + \|2u^{N}_{h} - u^{N-1}_{h}\|^{2} + \frac{1}{2} \sum_{n = 1}^{N-1} \|\theta^{n+1}_{h} - 2\theta^{n}_{h} + \theta^{n-1}_{h}\|^{2}
\\ + \frac{1}{2} \sum_{n = 1}^{N-1} \|u^{n+1}_{h} - 2u^{n}_{h} + u^{n-1}_{h}\|^{2} + \Delta t \sum_{n = 1}^{N-1} \| \nabla \theta^{n+1}_{h} \|^{2} + Pr \Delta t \sum_{n = 1}^{N-1} \| \nabla u^{n+1}_{h} \|^{2} 
\\ \leq C \Delta t \sum_{n = 1}^{N-1} \big\{ \| u^{n}_{h} \|^{2} + \|2u^{n}_{h} - u^{n-1}_{h}\|^{2} + \| \theta^{n}_{h} \|^{2} + \|2\theta^{n}_{h} - \theta^{n-1}_{h}\|^{2} \big\} 
\\ + \Delta t \sum_{n = 1}^{N-1} \big\{ \frac{6}{Pr} \| f^{n+1} \|_{-1}^{2} + 4 \| \gamma^{n+1} \|_{-1}^{2} + {8 C_{I}^{2} C{tr}^{2} |\Gamma_{N}|} + {6PrRa^{2}C_{PF,1}^2C_{I}^{2}C{tr}^{2} |\Gamma_{N}|}{Pr}\big\}
\\ + \|\theta^{1}_{h}\|^{2} + \|2\theta^{1}_{h} - \theta^{0}_{h}\|^{2} + \|u^{1}_{h}\|^{2} + \|2u^{1}_{h} - u^{0}_{h}\|^{2}.
\end{multline}
Apply Lemma \ref{l4}.  Then,
\begin{multline}
\|\theta^{N}_{h}\|^{2} + \|2\theta^{N}_{h} - \theta^{N-1}_{h}\|^{2} + \|u^{N}_{h}\|^{2} + \|2u^{N}_{h} - u^{N-1}_{h}\|^{2} + \frac{1}{2} \sum_{n = 1}^{N-1} \|\theta^{n+1}_{h} - 2\theta^{n}_{h} + \theta^{n-1}_{h}\|^{2}
\\ + \frac{1}{2} \sum_{n = 1}^{N-1} \|u^{n+1}_{h} - 2u^{n}_{h} + u^{n-1}_{h}\|^{2} + \Delta t \sum_{n = 1}^{N-1} \| \nabla \theta^{n+1}_{h} \|^{2} + Pr \Delta t \sum_{n = 1}^{N-1} \| \nabla u^{n+1}_{h} \|^{2}
\\ \leq exp(C t^{\ast}) \big\{\Delta t \sum_{n = 1}^{N-1} \Big( \frac{6}{Pr} \| f^{n+1} \|_{-1}^{2} + 4 \| \gamma^{n+1} \|_{-1}^{2} + {8 C_{I}^{2} C{tr}^{2} |\Gamma_{N}|} + {6PrRa^{2}C_{PF,1}^2C_{I}^{2}C{tr}^{2} |\Gamma_{N}|}\Big)
\\ + \|\theta^{1}_{h}\|^{2} + \|2\theta^{1}_{h} - \theta^{0}_{h}\|^{2} + \|u^{1}_{h}\|^{2} + \|2u^{1}_{h} - u^{0}_{h}\|^{2}\big\}.
\end{multline}
\noindent The result follows by recalling the identity $T^{n+1}_{h} = \theta^{n+1}_{h} - I_{h}\tau$ and applying the triangle inequality.  Thus, numerical approximations of velocity and temperature are stable.  We now prove stability of the pressure approximation.  We first form an estimate for the discrete time derivative term.  Consider (\ref{scheme:one:velocity}), isolate $(\frac{3u^{n+1}_{h} - 4u^{n}_{h} + u^{n-1}_{h}}{2\Delta t},v_{h})$, let $0 \neq v_{h} \in V_{h}$, and multiply by $\Delta t$.  Then,
\begin{multline}\label{vv}
	\frac{1}{2} (3u^{n+1}_{h} - 4u^{n}_{h} + u^{n-1}_{h},v_{h}) = -\Delta t b(<u_{h}>^{n}_{e},u^{n+1}_{h},v_{h}) - \Delta t b({u'}^{n}_{h},2u^{n}_{h} - u^{n-1}_{h},v_{h}) 
	\\ - \Delta t Pr(\nabla u^{n+1}_{h},\nabla v_{h}) + \Delta t PrRa(\xi (2\theta^{n}_{h}-\theta^{n-1}_{h} + I_{h}\tau),v_{h}) + \Delta t (f^{n+1},v_{h}).
\end{multline}
Applying Lemma \ref{l1} to the skew-symmetric trilinear terms and the Cauchy-Schwarz and Poincar\'{e}-Friedrichs inequalities to the remaining terms yields
\begin{align}
-\Delta t b(<u_{h}>^{n}_{e},u^{n+1}_{h},v_{h}) &\leq C_{1} \Delta t \| \nabla <u_{h}>^{n}_{e} \| \| \nabla u^{n+1}_{h} \| \| \nabla v_{h}\|,\label{stability:thick:estmean}\\
-\Delta t b({u'}^{n}_{h},2u^{n}_{h} - u^{n-1}_{h},v_{h}) &\leq 2C_{1} \Delta t \| \nabla {u'}^{n}_{h} \| \Big\{ \| \nabla  u^{n}_{h} \| + \| \nabla  u^{n-1}_{h} \| \Big\} \| \nabla v_{h}\|,\label{stability:thick:estfluc}\\
-\Delta t Pr(\nabla u^{n+1}_{h},\nabla v_{h}) &\leq Pr \Delta t \|\nabla u^{n+1}_{h} \| \|\nabla v_{h} \|, \label{stability:thick:estvisc}\\
\Delta t PrRa(\xi (2\theta^{n}_{h}-\theta^{n-1}_{h}),v_{h}) &\leq PrRa \Delta t \|2\theta^{n}_{h}-\theta^{n-1}_{h} \| \|v_{h} \| \leq PrRaC_{PF,1} \Delta t \|2\theta^{n}_{h}-\theta^{n-1}_{h} \| \|\nabla v_{h} \|, \label{stability:thick:estcoupling}\\
\Delta t PrRa(\xi I_{h}\tau,v_{h}) &\leq PrRaC_{PF,1}C_{I} C_{tr}|\Gamma_{N}|^{1/2} \Delta t \|\nabla v_{h} \|, \label{stability:thick:esttaucoupling}\\
\Delta t(f^{n+1},v_{h}) &\leq \Delta t \|f^{n+1} \|_{-1} \|\nabla v_{h} \| \label{stability:thick:estf2}.
\end{align}
Apply the above estimates in (\ref{vv}), divide by the common factor $\|\nabla v_{h} \|$ on both sides, and take the supremum over all $0 \neq v_{h} \in V_{h}$.  Then,
\begin{multline}
\frac{1}{2}\|3 u^{n+1}_{h} - 4 u^{n}_{h} + u^{n-1}_{h} \|_{V^{\ast}_{h}} \leq C_{1} \Delta t \| \nabla <u_{h}>^{n}_{e} \| \| \nabla u^{n+1}_{h} \|
\\ + 2C_{1} \Delta t \| \nabla {u'}^{n}_{h} \| \Big\{ \| \nabla  u^{n}_{h} \| + \| \nabla  u^{n-1}_{h} \| \Big\} \| \nabla v_{h}\| + Pr \Delta t \|\nabla u^{n+1}_{h} \| +  PrRaC_{PF,1} \Delta t \|2\theta^{n}_{h}-\theta^{n-1}_{h}\|
\\ + PrRaC_{PF,1}C_{I}C_{tr}|\Gamma_{N}|^{1/2}  \Delta t + \Delta t \|f^{n+1} \|_{-1}.
\end{multline}
\noindent Reconsider equation (\ref{scheme:one:velocity}).  Multiply by $\Delta t$ and isolate the pressure term,
\begin{multline}
\Delta t (p^{n+1}_{h}, \nabla \cdot v_{h}) = \frac{1}{2}(3u^{n+1}_{h} - 4u^{n}_{h} + u^{n-1}_{h},v_{h}) + \Delta t b(<u_{h}>^{n}_{e},u^{n+1}_{h},v_{h}) + \Delta t b({u'}^{n}_{h},2u^{n}_{h} - u^{n-1}_{h},v_{h}) 
\\ + Pr \Delta t(\nabla u^{n+1}_{h},\nabla v_{h}) - PrRa \Delta t(\gamma (2\theta^{n}_{h}-\theta^{n-1}_{h} + I_{h}\tau),v_{h}) - \Delta t(f^{n+1},v_{h}).
\end{multline}
\noindent Apply (\ref{stability:thick:estmean}) - (\ref{stability:thick:estf2}) on the r.h.s terms.  Then,
\begin{multline}
\Delta t (p^{n+1}_{h}, \nabla \cdot v_{h}) \leq \frac{1}{2}(3u^{n+1}_{h} - 4u^{n}_{h} + u^{n-1}_{h},v_{h}) + \Big\{C_{1} \Delta t \| \nabla <u_{h}>^{n}_{e} \| \| \nabla u^{n+1}_{h} \|
\\ + 2 C_{1} \Delta t \| \nabla {u'}^{n}_{h} \| \Big( \| \nabla  u^{n}_{h} \| + \| \nabla  u^{n-1}_{h} \| \Big) + Pr \Delta t \|\nabla u^{n+1}_{h} \|
\\+  PrRaC_{PF,1} \Delta t \Big(\|2\theta^{n}_{h}-\theta^{n-1}_{h} \| + C_{I}C_{tr}|\Gamma_{N}|^{1/2} \Big) + \Delta t \|f^{n+1} \|_{-1}\Big\}\|\nabla v_{h} \|.
\end{multline}
\noindent Divide by $\|\nabla v_{h} \|$ and note that $\frac{(3u^{n+1}_{h} - 4u^{n}_{h} + u^{n-1}_{h},v_{h})}{2\| \nabla v_{h} \|} \leq \frac{1}{2}\|3u^{n+1}_{h} - 4u^{n}_{h} + u^{n-1}_{h}\|_{V^{\ast}_{h}}$.  Take the supremum over all $0 \neq v_{h} \in X_{h}$,
\begin{multline}
\Delta t \sup_{0 \neq v_{h} \in X_{h}}\frac{(p^{n+1}_{h}, \nabla \cdot v_{h})}{\|\nabla v_{h} \|} \leq 2 \Big\{C_{1} \Delta t \| \nabla <u_{h}>^{n}_{e} \| \| \nabla u^{n+1}_{h} \| + 2 C_{1} \Delta t \| \nabla {u'}^{n}_{h} \| \Big( \| \nabla  u^{n}_{h} \| + \| \nabla  u^{n-1}_{h} \| \Big)
\\ + Pr \Delta t \|\nabla u^{n+1}_{h} \| +  PrRaC_{PF,1} \Delta t \Big(\|2\theta^{n}_{h}-\theta^{n-1}_{h} \| + C_{I}C_{tr}|\Gamma_{N}|^{1/2} \Big) + \Delta t \|f^{n+1} \|_{-1}\Big\}.
\end{multline}
\noindent Use the inf-sup condition,
\begin{multline}
\beta \Delta t \| p^{n+1}_{h}\| \leq 2 \Big\{C_{1} \Delta t \| \nabla <u_{h}>^{n}_{e} \| \| \nabla u^{n+1}_{h} \| + 2 C_{1} \Delta t \| \nabla {u'}^{n}_{h} \| \Big( \| \nabla  u^{n}_{h} \| + \| \nabla  u^{n-1}_{h} \| \Big)
\\ + Pr \Delta t \|\nabla u^{n+1}_{h} \| +  PrRaC_{PF,1} \Delta t \Big(\|2\theta^{n}_{h}-\theta^{n-1}_{h} \| + C_{I}C_{tr}|\Gamma_{N}|^{1/2} \Big) + \Delta t \|f^{n+1} \|_{-1}\Big\}.
\end{multline}
\noindent Sum from $n = 1$ to $n = N-1$, use condition \ref{c1}, recall $T^{n+1}_{h} = \theta^{n+1}_{h} + I_{h}\tau$, and use the triangle inequality.  The result follows, yielding stability of the pressure approximation, built on the stability of the temperature and velocity approximations.
\end{proof}
\textbf{Remark:}  Application of Lemma \ref{l4} in Theorem \ref{t1} allows for the loss of long time stability due to the exponential growth factor, in $t^{\ast}$.
%%%%%%%%%%%%%%%%%%%%%%%%%%%%%%%%%%%%
%%%%%%%%%%%%%%%%%%%%%%%%%%%%%%%%%%%%
%%%%%%%%%%%%%%%%%%%%%%%%%%%%%%%%%%%%
%%%%%%%%%%%%%%%%%%%%%%%%%%%%%%%%%%%%
\subsection{Error Analysis}
Denote $u^{n}$, $p^{n}$, and $T^{n}$ as the true solutions at time $t^{n} = n\Delta t$.  Assume the solutions satisfy the following regularity assumptions:
\begin{align} 
u &\in L^{\infty}(0,t^{\ast};X \cap H^{k+1}(\Omega)), \;  T, \tau \in L^{\infty}(0,t^{\ast};W \cap H^{k+1}(\Omega)), \notag \\
u_{t}, T_{t} &\in L^{\infty}(0,t^{\ast};H^{k+1}(\Omega)), \; u_{tt}, T_{tt} \in L^{\infty}(0,t^{\ast};H^{k+1}(\Omega)), \label{error:regularity} \\
u_{ttt}, T_{ttt} &\in L^{\infty}(0,t^{\ast};H^{k+1}(\Omega)), \; p \in L^{\infty}(0,t^{\ast};Q \cap H^{m}(\Omega)). \notag
\end{align}
\noindent \textbf{Remark:}  Regularity of the auxiliary temperature solution $\theta$ follows from the above regularity assumptions.  Convergence results will be proven for the error in the auxiliary variable $\theta$ which, by the triangle inequality and interpolation estimates, implies the results for the solution variable $T$.  

The errors for the solution variables are denoted
\begin{align*}
e^{n}_{u} &= u^{n} - u^{n}_{h}, \; e^{n}_{T} = T^{n} - T^{n}_{h}, \; e^{n}_{p} = p^{n} - p^{n}_{h}.
\end{align*}\vspace*{-\baselineskip}
\begin{definition} (Consistency error).  The consistency errors are defined as
\begin{align*}
\varsigma_{u}(u^{n};v_{h}) = \big(\frac{3u^{n+1} - 4u^{n} + u^{n-1}}{2\Delta t} - u^{n+1}_{t}, v_{h}\big), \; \varsigma_{T}(T^{n};S_{h}) = \big(\frac{3T^{n+1} - 4T^{n} + T^{n-1}}{2\Delta t} - T^{n+1}_{t}, S_{h}\big).
\end{align*}
\end{definition}
\begin{lemma}\label{consistency}
Provided  $u$ and $T$ satisfy the regularity assumptions \ref{error:regularity}, then $\exists \; C>0$ such that $\forall \; r > 0$
\begin{align*}
\lvert \varsigma_{u}(u^{n};v_{h}) \rvert &\leq \frac{C C^{2}_{PF,1} C_{r} \Delta t^{3}}{\epsilon}\| u_{ttt}\|^{2}_{L^{2}(t^{n-2},t^{n};L^{2}(\Omega))} + \frac{\epsilon}{r} \| \nabla v_{h} \|^{2},
\\ \lvert \varsigma_{T}(T^{n};S_{h}) \rvert &\leq \frac{C C^{2}_{PF,2} C_{r} \Delta t^{3}}{\epsilon}\| T_{ttt}\|^{2}_{L^{2}(t^{n-2},t^{n};L^{2}(\Omega))} + \frac{\epsilon}{r} \| \nabla S_{h} \|^{2}.
\end{align*}
\end{lemma}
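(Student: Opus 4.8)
The plan is to prove both estimates by Taylor expanding the backward differences about $t^{n+1}$ using the integral form of the remainder, so that only the third time derivatives survive; I carry out the velocity case, the temperature case being identical with $C_{PF,1}$, $v_{h}$, $u_{ttt}$ replaced by $C_{PF,2}$, $S_{h}$, $T_{ttt}$. Writing $u^{n}=u(t^{n+1}-\Delta t)$ and $u^{n-1}=u(t^{n+1}-2\Delta t)$ and expanding each about $t^{n+1}$ to second order with integral remainder gives
\begin{align*}
u^{n} &= u^{n+1} - \Delta t\, u^{n+1}_{t} + \frac{\Delta t^{2}}{2}\, u^{n+1}_{tt} + \frac{1}{2}\int_{t^{n+1}}^{t^{n}}(t^{n}-s)^{2}\,u_{ttt}(s)\,ds, \\
u^{n-1} &= u^{n+1} - 2\Delta t\, u^{n+1}_{t} + 2\Delta t^{2}\, u^{n+1}_{tt} + \frac{1}{2}\int_{t^{n+1}}^{t^{n-1}}(t^{n-1}-s)^{2}\,u_{ttt}(s)\,ds.
\end{align*}
Forming the combination $3u^{n+1}-4u^{n}+u^{n-1}$, the zeroth and second order contributions cancel while the first order contribution is exactly $2\Delta t\, u^{n+1}_{t}$ (this is precisely what makes BDF2 second order). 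Dividing by $2\Delta t$ and subtracting $u^{n+1}_{t}$ therefore leaves only the two integral remainders,
\begin{align*}
\frac{3u^{n+1}-4u^{n}+u^{n-1}}{2\Delta t} - u^{n+1}_{t} = \frac{1}{2\Delta t}\Big( 2\int_{t^{n}}^{t^{n+1}}(t^{n}-s)^{2} u_{ttt}(s)\,ds - \frac{1}{2}\int_{t^{n-1}}^{t^{n+1}}(t^{n-1}-s)^{2} u_{ttt}(s)\,ds \Big).
\end{align*}

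Next I would pair this identity with $v_{h}$, apply the Cauchy--Schwarz inequality in $L^{2}(\Omega)$, use the Poincar\'{e}--Friedrichs inequality $\|v_{h}\|\le C_{PF,1}\|\nabla v_{h}\|$, and then Young's inequality with parameters $\epsilon$ and $r$ to split off the $\frac{\epsilon}{r}\|\nabla v_{h}\|^{2}$ term, leaving a constant multiple of $C_{PF,1}^{2}C_{r}/\epsilon$ times the squared $L^{2}(\Omega)$ norm of the remainder. It then remains to bound that remainder. Using Minkowski's integral inequality to move the $L^{2}(\Omega)$ norm inside each time integral, the pointwise bounds $(t^{n}-s)^{2}\le \Delta t^{2}$ on $[t^{n},t^{n+1}]$ and $(t^{n-1}-s)^{2}\le 4\Delta t^{2}$ on $[t^{n-1},t^{n+1}]$, and the Cauchy--Schwarz inequality in time, each integral is controlled by $\Delta t^{2}\cdot\Delta t^{1/2}\|u_{ttt}\|_{L^{2}(\,\cdot\,;L^{2}(\Omega))}$. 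Together with the prefactor $1/(2\Delta t)$ this yields a remainder of size $\Delta t^{3/2}\|u_{ttt}\|_{L^{2}}$, whose square is the advertised $\Delta t^{3}\|u_{ttt}\|^{2}_{L^{2}}$ factor.

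The difficulties here are bookkeeping rather than conceptual. One must carry the algebra carefully so that the $u^{n+1}$ and $u^{n+1}_{tt}$ terms genuinely cancel and the $u^{n+1}_{t}$ term reproduces $2\Delta t\, u^{n+1}_{t}$, and one must track how the factor $1/(2\Delta t)$ combines with the $\Delta t^{5/2}$ coming from each remainder to produce the half-power $\Delta t^{3/2}$ before squaring. Collecting both remainders over the time interval spanned by $t^{n-1}$, $t^{n}$, $t^{n+1}$ (enlarging the shorter window to this common interval) and absorbing all numerical constants into a single $C$ then produces the stated bound; the temperature estimate follows verbatim, with $b^{\ast}$, $C_{PF,2}$, $S_{h}$, and $T_{ttt}$ in place of their velocity counterparts.
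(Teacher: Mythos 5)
Your proposal is correct and follows exactly the route the paper's own (one-line) proof indicates---Taylor's theorem with integral remainder about $t^{n+1}$, Cauchy--Schwarz, the Poincar\'{e}--Friedrichs inequality, and Young's inequality with the $\epsilon$, $r$ parameters---with the expansion coefficients, the cancellation of the zeroth- and second-order terms, and the $\Delta t^{3/2}$ bookkeeping all checked correctly. The only mismatch is notational and on the paper's side: given the paper's definition $\varsigma_{u}(u^{n};v_{h}) = \big(\frac{3u^{n+1}-4u^{n}+u^{n-1}}{2\Delta t}-u^{n+1}_{t},v_{h}\big)$, your window $L^{2}(t^{n-1},t^{n+1};L^{2}(\Omega))$ is the consistent one (as the paper's later use of the bound with argument $\theta^{n+1}$ over $(t^{n-1},t^{n+1})$ confirms), so the lemma's $L^{2}(t^{n-2},t^{n};L^{2}(\Omega))$ is an internal indexing slip, and your passing mention of $b^{\ast}$ in the temperature case is a harmless irrelevance since no trilinear form appears in the consistency error.
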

\begin{proof}
These follow from the Cauchy-Schwarz-Young inequality, Poincar\'{e}-Friedrichs inequality, and Taylor's Theorem with integral remainder.
\end{proof}
\begin{theorem} \label{error:thick}
For (u,p,T) satisfying (1) - (5), suppose that $(u^{0}_{h},p^{0}_{h},T^{0}_{h}) \in (X_{h},Q_{h},W_{h})$ are approximations of $(u^{0},p^{0},T^{0})$ to within the accuracy of the interpolant.  Further, suppose that condition \ref{c1} holds. Then there exists a constant C such that

\begin{multline*}
\frac{1}{2}\|e^{N}_{T}\|^{2} + \frac{1}{2}\|2e^{N}_{T}-e^{N-1}_{T}\|^{2} + \|e^{N}_{u}\|^{2} + \|2e^{N}_{u} - e^{N-1}_{u}\|^{2} + \frac{1}{2}\sum_{n = 1}^{N-1}\big(\|e^{n+1}_{T} - 2e^{n}_{T} + e^{n-1}_{T}\|^{2} + \|e^{n+1}_{u} - 2e^{n}_{u} + e^{n-1}_{u}\|^{2}\big)
\\ + \frac{\Delta t}{2} \sum_{n = 1}^{N-1} \|\nabla e^{n+1}_{T}\|^{2} + Pr \Delta t \sum_{n = 1}^{N-1} \|\nabla e^{n+1}_{u}\|^{2} + \frac{Pr \Delta t}{2} \Big(\|\nabla e^{N}_{u}\|^{2} +\frac{1}{2} \|\nabla e^{N-1}_{u}\|^{2}\Big)
\\ \leq \exp(Ct^{\ast}) \Big\{ \Delta t \inf_{S_{h} \in W_{\Gamma_{1,h}}} \Big( \vertiii{\theta - S_{h}}^{2}_{\infty,0} + \vertiii{\theta - S_{h}}_{\infty,0} \vertiii{\nabla (\theta - S_{h})}_{\infty,0} + \vertiii{\nabla (\theta - S_{h})}^{2}_{\infty,0} + \vertiii{(\theta - S_{h})_{t}}^{2}_{\infty,0}
\\ + h \Delta t^{2} \vertiii{(\theta - S_{h})_{tt}}^{2}_{\infty,0} \Big) + \Delta t \inf_{v_{h} \in X_{h}} \Big( \vertiii{u - v_{h} }^{2}_{\infty,0} + \vertiii{u - v_{h} }^{2}_{\infty,0} \vertiii{\nabla (u - v_{h}) }_{\infty,0} + \vertiii{\nabla (u - v_{h}) }^{2}_{\infty,0}
\\ + \vertiii{(u - v_{h})_{t}}^{2}_{\infty,0} + h \Delta t^{2} \vertiii{(u - v_{h})_{tt}}^{2}_{\infty,0} \Big) + \Delta t \inf_{q_{h} \in Q_{h}} \vertiii{ p - q_{h} }^{2}_{\infty,0}
\\ + \Delta t \inf_{S_{h} \in W_{h}} \Big( \vertiii{\tau - I_{h}\tau}^{2}_{\infty,0} + \vertiii{\nabla (\tau - I_{h}\tau) }^{2}_{\infty,0}\Big) + h\Delta t^{3} + \Delta t^{4} \Big\}
\\ + \|\zeta^{0}_{h}\|^{2} + \|2\zeta^{1}_{h}-\zeta^{0}_{h}\|^{2} + \|\eta^{0}_{h}\|^{2} + \|2\eta^{1}_{h} - \phi^{0}_{h}\|^{2} + \frac{Pr \Delta t}{2} \Big(\|\nabla \eta^{1}_{h}\|^{2} +\frac{1}{2} \|\nabla \eta^{0}_{h}\|^{2}\Big)
\\ + \frac{1}{2}\Big(\|e^{0}_{T}\|^{2} + \|2e^{1}_{T}-e^{0}_{T}\|^{2}\Big) + \|e^{0}_{u}\|^{2} + \|2e^{1}_{u} - e^{0}_{u}\|^{2} + \frac{Pr \Delta t}{2} \Big(\|\nabla e^{1}_{u}\|^{2} +\frac{1}{2} \|\nabla e^{0}_{u}\|^{2}\Big).
\end{multline*}

\end{theorem}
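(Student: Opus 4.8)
The plan is to mirror the energy argument of Theorem~\ref{t1}, but starting from an error equation rather than from the scheme. First I would evaluate the weak form of the true solution $(u,p,T)$ at $t^{n+1}$, replacing $u^{n+1}_t$ and $T^{n+1}_t$ by the BDF2 difference quotients at the cost of the consistency errors $\varsigma_u,\varsigma_T$ of Lemma~\ref{consistency}, and rewrite the true advection $b(u^{n+1},u^{n+1},v_h)$ in the ensemble-split, extrapolated form $b(<u>^n_e,u^{n+1},v_h)+b({u'}^n,2u^n-u^{n-1},v_h)$ modulo $O(\Delta t^2)$ remainders (each remainder being a second time-difference of $u$), so that it is structurally comparable with the scheme. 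Subtracting (\ref{scheme:one:velocity}) and (\ref{scheme:one:temperature}) then produces coupled error equations for $e_u=u-u_h$ and for the auxiliary temperature error $e_\theta=\theta-\theta_h$.

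Next I would decompose each error into an interpolation part and a finite-element part, $e^n_u=(u^n-\tilde u^n)-\eta^n_h$ and $e^n_\theta=(\theta^n-\tilde\theta^n)-\zeta^n_h$, choosing $\tilde u^n\in V_h$ via Lemma~\ref{l5} so that $\eta^n_h\in V_h$ and $\zeta^n_h\in W_{\Gamma_1,h}$. Testing with $v_h=\eta^{n+1}_h$ and $S_h=\zeta^{n+1}_h$, the discrete pressure drops out (since $\eta^{n+1}_h\in V_h$) and the true pressure enters only as $(p^{n+1}-q_h,\nabla\cdot\eta^{n+1}_h)$ for arbitrary $q_h\in Q_h$, which is the origin of the $\inf_{q_h\in Q_h}$ term. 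Adding the two tested equations and applying the polarization (G-stability) identity to the BDF2 terms reproduces the telescoping $L^2$ norms $\|\eta^{n+1}_h\|^2+\|2\eta^{n+1}_h-\eta^n_h\|^2$ (and likewise for $\zeta_h$), the dissipation $\Delta t\|\nabla\zeta^{n+1}_h\|^2$ and $Pr\Delta t\|\nabla\eta^{n+1}_h\|^2$, the second-difference terms, and, from a careful regrouping of the viscous contribution, the endpoint gradient energies $\frac{Pr\Delta t}{2}(\|\nabla\eta^N_h\|^2+\frac{1}{2}\|\nabla\eta^{N-1}_h\|^2)$ appearing on the left, exactly as in (\ref{stability:thicku}).

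The heart of the argument, and the step I expect to be the main obstacle, is controlling the many trilinear remainders generated by this subtraction. The most delicate is the explicit fluctuation term: the mismatch between what consistency requires, $b({u'}^n_h,\eta^{n+1}_h,\cdot)$, and what the scheme supplies, $b({u'}^n_h,2\eta^n_h-\eta^{n-1}_h,\cdot)$, leaves a factor of the form $b({u'}^n_h,\eta^{n+1}_h,\eta^{n+1}_h-2\eta^n_h+\eta^{n-1}_h)$ which, precisely as in (\ref{stability:thick:estbstar})--(\ref{stability:thick:estb}), I would bound using the sharp continuity estimates of Lemma~\ref{l1}, the inverse inequality, and the timestep condition~(\ref{c1}), so as to absorb it into the dissipation and the second-difference terms on the left. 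The remaining advection differences, the Boussinesq coupling $PrRa(\xi(\cdots),\eta^{n+1}_h)$, and the $b^\ast(\cdot,I_h\tau,\cdot)$ remainder are handled by the continuity bounds of Lemma~\ref{l1} and Cauchy--Schwarz--Young, with each dissipative quantity split so that a small $\epsilon$-share is reabsorbed on the left while the interpolation errors and their time derivatives accumulate on the right.

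Finally I would insert the consistency bounds of Lemma~\ref{consistency} (contributing the $h\Delta t^3$ and $\Delta t^4$ terms), sum from $n=1$ to $N-1$, and apply the discrete Gronwall Lemma~\ref{l4} to eliminate the $\|\eta^n_h\|$- and $\|\zeta^n_h\|$-type terms from the right at the cost of the $\exp(Ct^\ast)$ factor. Taking the infimum over $X_h$, $Q_h$ and $W_{\Gamma_1,h}$ through the approximation properties (\ref{a1})--(\ref{a3}) and Lemma~\ref{l5}, and then passing from $\eta_h,\zeta_h$ back to $e_u,e_\theta$ (and from $\theta$ to $T$) via the triangle inequality and interpolation, delivers the stated estimate.
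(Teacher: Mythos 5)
Your proposal follows essentially the same route as the paper's proof: consistency errors via Lemma~\ref{consistency}, decomposition of $e_u$ and $e_\theta$ into interpolation and finite-element parts with the discrete parts tested in $V_h$ and $W_{\Gamma_{1},h}$ (so the discrete pressure vanishes and only $p^{n+1}-q_h$ survives), the G-stability/polarization identity for BDF2, add-and-subtract manipulation of the trilinear terms (your a priori rewriting of $b(u^{n+1},u^{n+1},v_h)$ in ensemble-split form modulo second time-differences is the same computation the paper organizes as post-subtraction add-and-subtracts), absorption of the lagged-fluctuation term $b({u'}^{n}_{h},\cdot,\cdot)$ via Lemma~\ref{l1}, the inverse inequality and condition~\ref{c1}, then summation, discrete Gronwall, infimums, and the triangle inequality. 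One small correction: the $h\Delta t^{3}$ term is contributed not by Lemma~\ref{consistency} (which yields only the $\Delta t^{4}$ piece) but by applying condition~\ref{c1} to the $\Delta t^{4}\|\nabla {u'}^{n}_{h}\|^{2}\|\nabla u_{tt}\|^{2}$-type remainders, i.e.\ exactly the mechanism you invoke elsewhere in your third paragraph.
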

\begin{proof}
Let $T = \theta + \tau$.  The true solutions satisfy for all $n = 1, ... N-1$:
\begin{align}
(\frac{3u^{n+1} - 4u^{n} + u^{n-1}}{2\Delta t},v_{h}) + b(u^{n+1},u^{n+1},v_{h}) + Pr(\nabla u^{n+1},\nabla v_{h}) - (p^{n+1}, \nabla \cdot v_{h}) \label{error:one:truevelocity}
\\ =  PrRa(\gamma (\theta^{n+1}+\tau),v_{h}) + (f^{n+1},v_{h}) + \varsigma_{u}(u^{n+1};v_{h}) \; \; \forall v_{h} \in X_{h} \notag,
\\ (q_{h}, \nabla \cdot u^{n+1}) = 0 \; \; \forall q_{h} \in Q_{h},
\\ (\frac{3\theta^{n+1} - 4\theta^{n} + \theta^{n-1}}{2\Delta t},S_{h}) + b^{\ast}(u^{n+1},\theta^{n+1},S_{h}) + (\nabla \theta^{n+1},\nabla S_{h}) + (\nabla \tau,\nabla S_{h}) \label{error:one:truetemp}
\\ = (\gamma^{n+1},S_{h}) + \varsigma_{T}(\theta^{n+1};S_{h}) \; \; \forall S_{h} \in W_{\Gamma_{1,h}}.\notag
\end{align}
Subtract (\ref{error:one:truetemp}) and (\ref{scheme:one:temperature}), then the error equation for temperature is
\begin{align}
(\frac{3e^{n+1}_{\theta} - 4e^{n}_{\theta} + e^{n-1}_{\theta}}{2\Delta t},S_{h}) + b^{\ast}(u^{n+1},\theta^{n+1},S_{h}) - b^{\ast}(<u_{h}>^{n}_{e},\theta^{n+1}_{h},S_{h}) - b^{\ast}({u'}^{n}_{h},2\theta^{n}_{h}-\theta^{n-1}_{h},S_{h})
\\ + b^{\ast}(u^{n+1},\tau,S_{h})-b^{\ast}(u^{n}_{h},I_{h}\tau,S_{h}) + (\nabla e^{n+1}_{\theta},\nabla S_{h}) + (\nabla (\tau - I_{h}\tau),\nabla S_{h})  = \varsigma_{T}(\theta^{n+1},S_{h}) \; \; \forall S_{h} \in W_{\Gamma_{1,h}} \notag.
\end{align}
Letting $e^{n}_{\theta} = (\theta^{n} - \tilde{\theta}^{n}) - (\theta^{n}_{h}- \tilde{\theta}^{n}) = \zeta^{n} - \psi^{n}_{h}$  and rearranging give,
\begin{multline*}
(\frac{3\psi^{n+1}_{h} - 4\psi^{n}_{h} + \psi^{n-1}_{h}}{2\Delta t},S_{h}) + (\nabla \psi^{n+1}_{h},\nabla S_{h}) = (\frac{3\zeta^{n+1} - 4\zeta^{n} + \zeta^{n-1}}{2\Delta t},S_{h}) + (\nabla \zeta^{n+1},\nabla S_{h})
\\ + (\nabla (\tau - I_{h}\tau),\nabla S_{h}) + b^{\ast}(u^{n+1},\theta^{n+1},S_{h}) - b^{\ast}(2u^{n}_{h}-u^{n-1}_{h},\theta^{n+1}_{h},S_{h}) - b^{\ast}({u'}^{n}_{h},-\theta^{n+1}_{h}+2\theta^{n}_{h}-\theta^{n-1}_{h},S_{h})
\\ + b^{\ast}(u^{n+1},\tau,S_{h})-b^{\ast}(u^{n}_{h},I_{h}\tau,S_{h}) - \varsigma_{T}(\theta^{n+1},S_{h}) \; \; \forall S_{h} \in W_{\Gamma_{1,h}}.
\end{multline*}
Setting $S_{h} = \psi^{n+1}_{h} \in W_{\Gamma_{1,h}}$ yields
\begin{multline*}
\frac{1}{4 \Delta t} \Big\{\|\psi^{n+1}_{h}\|^{2} + \|2\psi^{n+1}_{h}-\psi^{n}_{h}\|^{2}\Big\} - \frac{1}{4 \Delta t} \Big\{\|\psi^{n}_{h}\|^{2} + \|2\psi^{n}_{h} - \psi^{n-1}_{h}\|^{2}\Big\} + \frac{1}{4 \Delta t} \|\psi^{n+1}_{h} - 2\psi^{n}_{h} + \psi^{n-1}_{h}\|^{2} + \|\nabla \psi^{n+1}_{h}\|^{2}
\\ = \frac{1}{2\Delta t}(3\zeta^{n+1}-4\zeta^{n}+\zeta^{n-1},\psi^{n+1}_{h}) + (\nabla \zeta^{n+1},\nabla \psi^{n+1}_{h}) + (\nabla (\tau - I_{h}\tau),\nabla \psi^{n+1}_{h}) + b^{\ast}(u^{n+1},\theta^{n+1},\psi^{n+1}_{h})
\\ - b^{\ast}(2u^{n}_{h}-u^{n-1}_{h},\theta^{n+1}_{h},\psi^{n+1}_{h}) - b^{\ast}({u'}^{n}_{h},-\theta^{n+1}_{h}+2\theta^{n}_{h}-\theta^{n-1}_{h},\psi^{n+1}_{h}) + b^{\ast}(u^{n+1},\tau,\psi^{n+1}_{h})
\\- b^{\ast}(u^{n}_{h},I_{h}\tau,\psi^{n+1}_{h}) - \varsigma_{T}(\theta^{n+1},\psi^{n+1}_{h}).
\end{multline*}
Add and subtract $b^{\ast}(u^{n+1},\theta^{n+1}_{h},\psi^{n+1}_{h})$, $b^{\ast}(2u^{n}-u^{n-1},\theta^{n+1}_{h},\psi^{n+1}_{h})$, $b^{\ast}({u'}^{n}_{h},-\theta^{n+1}+2\theta^{n}-\theta^{n-1},\psi^{n+1}_{h})$, and $b^{\ast}(2u^{n}-u^{n-1},\tau-I_{h}\tau,\psi^{n+1}_{h})$ to the r.h.s.  Rearrange, then
\begin{multline}\label{fet1}
\frac{1}{4 \Delta t} \Big\{\|\psi^{n+1}_{h}\|^{2} + \|2\psi^{n+1}_{h}-\psi^{n}_{h}\|^{2}\Big\} - \frac{1}{4 \Delta t} \Big\{\|\psi^{n}_{h}\|^{2} + \|2\psi^{n}_{h} - \psi^{n-1}_{h}\|^{2}\Big\} + \frac{1}{4 \Delta t} \|\psi^{n+1}_{h} - 2\psi^{n}_{h} + \psi^{n-1}_{h}\|^{2}
\\ + \|\nabla \psi^{n+1}_{h}\|^{2} = \frac{1}{\Delta 2t}(3\zeta^{n+1}-4\zeta^{n}+\zeta^{n-1},\psi^{n+1}_{h}) + (\nabla \zeta^{n+1},\nabla \psi^{n+1}_{h}) + b^{\ast}(u^{n+1},\zeta^{n+1},\psi^{n+1}_{h}) 
\\ + b^{\ast}(u^{n+1}-2u^{n}+u^{n-1},\theta^{n+1}_{h},\psi^{n+1}_{h}) + b^{\ast}(2\eta^{n}-\eta^{n-1},\theta^{n+1}_{h},\psi^{n+1}_{h})
\\ - b^{\ast}(2\phi^{n}_{h}-\phi^{n-1}_{h},\theta^{n+1}_{h},\psi^{n+1}_{h}) - b^{\ast}({u'}^{n}_{h},\zeta^{n+1}-2\zeta^{n}+\zeta^{n-1},\psi^{n+1}_{h}) + b^{\ast}({u'}^{n}_{h},\psi^{n+1}_{h}-2\psi^{n}_{h}+\psi^{n-1}_{h},\psi^{n+1}_{h})
\\ + b^{\ast}({u'}^{n}_{h},\theta^{n+1}-2\theta^{n}+\theta^{n-1},\psi^{n+1}_{h}) + b^{\ast}(u^{n+1}-2u^{n}+u^{n-1},\tau,\psi^{n+1}_{h}) + b^{\ast}(2u^{n}-u^{n-1},\tau - I_{h}\tau,\psi^{n+1}_{h})
\\ + b^{\ast}(2\eta^{n}-\eta^{n-1},I_{h}\tau,\psi^{n+1}_{h}) - b^{\ast}(2\phi^{n}_{h}-\phi^{n-1}_{h},I_{h}\tau,\psi^{n+1}_{h}) + (\nabla (\tau - I_{h}\tau),\nabla \psi^{n+1}_{h}) - \varsigma_{T}(T^{n+1},\psi^{n+1}_{h}).
\end{multline}
Follow analogously for the velocity error equation.  Subtract (\ref{error:one:truevelocity}) and (\ref{scheme:one:velocity}), split the error into $e^{n}_{u} = (u^{n} - \tilde{u}^{n}) - (u^{n}_{h}- \tilde{u}^{n}) = \eta^{n} - \phi^{n}_{h}$, let $v_{h} = \phi^{n+1}_{h} \in V_{h}$, add and subtract $PrRa(\xi(2\theta^{n} - \theta^{n-1} + \tau),\phi^{n+1}_{h})$, $b(u^{n+1},u^{n+1}_{h},\phi^{n+1}_{h})$, $b(2u^{n}-u^{n-1},u^{n+1}_{h},\phi^{n+1}_{h})$, and $b({u'}^{n}_{h},-u^{n+1}+2u^{n}-u^{n-1},\phi^{n+1}_{h})$.  Then,
\begin{multline}\label{feu1}
\frac{1}{4 \Delta t} \Big\{\|\phi^{n+1}_{h}\|^{2} + \|2\phi^{n+1}_{h}-\phi^{n}_{h}\|^{2}\Big\} - \frac{1}{4 \Delta t} \Big\{\|\phi^{n}_{h}\|^{2} + \|2\phi^{n}_{h} - \phi^{n-1}_{h}\|^{2}\Big\} + \frac{1}{4 \Delta t} \|\phi^{n+1}_{h} - 2\phi^{n}_{h} + \phi^{n-1}_{h}\|^{2}
\\ + Pr \|\nabla \phi^{n+1}_{h}\|^{2} = \frac{1}{2\Delta t}(3\eta^{n+1}-4\eta^{n}+\eta^{n-1},\phi^{n+1}_{h}) + Pr (\nabla \eta^{n+1},\nabla \phi^{n+1}_{h}) - (p^{n+1} - q^{n+1}_{h},\nabla \cdot \phi^{n+1}_{h})
\\ - PrRa(\xi (\theta^{n+1}-2\theta^{n}+\theta^{n-1}),\phi^{n+1}_{h}) - PrRa(\xi (2\zeta^{n} - \zeta^{n-1}),\phi^{n+1}_{h}) + PrRa(\xi (2\psi^{n}_{h}-\psi^{n-1}_{h}),\phi^{n+1}_{h})
\\ - PrRa(\xi (\tau - I_{h}\tau),\phi^{n+1}_{h}) + b(u^{n+1},\eta^{n+1},\phi^{n+1}_{h}) + b(u^{n+1}-2u^{n}+u^{n-1},u^{n+1}_{h},\phi^{n+1}_{h})
\\ + b(2\eta^{n}-\eta^{n-1},u^{n+1}_{h},\phi^{n+1}_{h}) - b(2\phi^{n}_{h}-\phi^{n-1}_{h},u^{n+1}_{h},\phi^{n+1}_{h}) - b({u'}^{n}_{h},\eta^{n+1}-2\eta^{n}+\eta^{n-1},\phi^{n+1}_{h})
\\ + b({u'}^{n}_{h},\phi^{n+1}_{h}-2\phi^{n}_{h}+\phi^{n-1}_{h},\phi^{n+1}_{h}) + b({u'}^{n}_{h},u^{n+1}-2u^{n}+u^{n-1},\phi^{n+1}_{h}) - \varsigma_{u}(u^{n+1},\phi^{n+1}_{h}).
\end{multline}
We seek to now estimate all terms on the r.h.s. in such a way that we may subsume the terms involving unknown pieces $\psi^{k}_{h}$ and $\phi^{k}_{h}$ into the l.h.s.  The following estimates are formed using skew-symmetry, Lemma \ref{l1}, and the Cauchy-Schwarz-Young inequality,
\begin{align}%%%%          3 and 5                 %%%%
b^{\ast}(u^{n+1},\zeta^{n+1},\psi^{n+1}_{h}) &\leq C_{6} \| \nabla u^{n+1} \| \| \nabla \psi^{n+1}_{h} \| \sqrt{\| \zeta^{n+1} \| \| \nabla \zeta^{n+1} \|}  
\\ &\leq \frac{C_{r} C_{6}^{2}}{\epsilon_3} \| \nabla u^{n+1} \|^{2} \| \| \zeta^{n+1} \| \| \nabla \zeta^{n+1} \| + \frac{\epsilon_3}{r} \| \nabla \psi^{n+1}_{h} \|^{2},\notag 
\\ b^{\ast}(2\eta^{n}-\eta^{n-1},\theta^{n+1}_{h},\psi^{n+1}_{h}) &\leq C_{4} \| \nabla \theta^{n+1}_{h} \| \| \nabla \psi^{n+1}_{h} \|\Big\{2\sqrt{\| \eta^{n}\| \| \nabla \eta^{n}\|} + \sqrt{\| \eta^{n-1}\| \| \nabla \eta^{n-1}\|}\}  
\\ &\leq \frac{8C_{r} C_{4}^{2}}{\epsilon_5} \| \nabla \theta^{n+1}_{h} \|^{2} \Big\{\| \eta^{n}\| \| \nabla \eta^{n}\| + \| \eta^{n-1}\| \| \nabla \eta^{n-1}\|\Big\} + \frac{\epsilon_5}{r} \| \nabla \psi^{n+1}_{h} \|^{2}. \notag
\end{align}
Applying Lemma \ref{l1}, the Cauchy-Schwarz-Young inequality, and Taylor's theorem yields,
\begin{align}%%%%                    4,7,9             %%%%
b^{\ast}(u^{n+1}-2u^{n}+u^{n-1},\theta^{n+1}_{h},\psi^{n+1}_{h}) &\leq C_{3} \| \nabla (u^{n+1}-2u^{n}+u^{n-1}) \| \| \nabla \theta^{n+1}_{h} \| \| \nabla \psi^{n+1}_{h} \| 
\\ &\leq \frac{C_{r} C_{3}^{2}}{\epsilon_4} \| \nabla (u^{n+1}-2u^{n}+u^{n-1}) \|^{2} \| \nabla \theta^{n+1}_{h} \|^{2} + \frac{\epsilon_4}{r} \| \nabla \psi^{n+1}_{h} \|^{2} \notag
\\ &\leq \frac{C C_{r} C_{3}^{2} \Delta t^{3}}{\epsilon_4} \| \nabla \theta^{n+1}_{h} \|^{2} \| \nabla u_{tt} \|^{2}_{L^{2}(t^{n-1},t^{n+1};L^{2}(\Omega))} + \frac{\epsilon_4}{r} \| \nabla \psi^{n+1}_{h} \|^{2},\notag
\\ -b^{\ast}({u'}^{n}_{h},\zeta^{n+1}-2\zeta^{n}+\zeta^{n-1},\psi^{n+1}_{h}) &\leq C_{3} \| \nabla {u'}^{n}_{h} \| \| \nabla \psi^{n+1}_{h} \|\| \nabla (\zeta^{n+1}-2\zeta^{n}+\zeta^{n-1}) \|
\\ &\leq \frac{C C_{r} C_{3}^{2} \Delta t^{3}}{\epsilon_7} \| \nabla {u'}^{n}_{h} \|^{2}\| \nabla \zeta_{tt} \|^{2}_{L^{2}(t^{n-1},t^{n+1};L^{2}(\Omega))} + \frac{\epsilon_7}{r} \| \nabla \psi^{n+1}_{h} \|^{2}, \notag
\\ b^{\ast}({u'}^{n}_{h},\theta^{n+1}-2\theta^{n}+\theta^{n-1},\psi^{n+1}_{h}) &\leq C_{3} \| \nabla {u'}^{n}_{h} \| \| \nabla (\theta^{n+1}-2\theta^{n}+\theta^{n-1}) \| \| \nabla \psi^{n+1}_{h} \| 
\\ &\leq \frac{C C_{r} C_{3}^{2}\Delta t^{3}}{\epsilon_{9}} \| \nabla {u'}^{n}_{h} \|^{2} \| \nabla \theta_{tt} \|^{2}_{L^{2}(t^{n-1},t^{n+1};L^{2}(\Omega))} + \frac{\epsilon_{9}}{r} \| \nabla \psi^{n+1}_{h}\|^{2}.\notag
\end{align}
Apply the triangle inequality, Lemma \ref{l1} and the Cauchy-Schwarz-Young inequality twice.  This yields
% \begin{align}%%%%           6,13              %%%%
% \lvert -b^{\ast}(2\phi^{n}_{h}-\phi^{n-1}_{h},\theta^{n+1}_{h},\psi^{n+1}_{h}) \rvert &\leq C_{4}\| \nabla \theta^{n+1}_{h} \| \| \nabla \psi^{n+1}_{h} \| \Big\{ 2\sqrt{\| \phi^{n}_{h} \| \| \nabla \phi^{n}_{h} \|} + \sqrt{\| \phi^{n-1}_{h} \| \| \nabla \phi^{n-1}_{h} \|}\Big\} 
% \\ &\leq C_{4} C_{T}(j) \| \nabla \psi^{n+1}_{h} \| \Big\{ 2\sqrt{\| \phi^{n}_{h} \| \| \nabla \phi^{n}_{h} \|} + \sqrt{\| \phi^{n-1}_{h} \| \| \nabla \phi^{n-1}_{h} \|}\Big\} \notag
% \\ &\leq {C_{4}C_{T}\epsilon_6} \| \nabla \psi^{n+1}_{h} \|^{2} + \frac{C_{4}C_{T}\delta_6}{2 \epsilon_6} \| \nabla \phi^{n}_{h} \|^{2} + \frac{C_{4}C_{T}}{2 \epsilon_6 \delta_6} \| \phi^{n}_{h} \|^{2} \notag
% \\ &+ \frac{C_{4}C_{T}{\epsilon'}_6}{2} \| \nabla \psi^{n+1}_{h} \|^{2} + \frac{C_{4}C_{T}{\delta'}_6}{4 {\epsilon'}_6} \| \nabla \phi^{n-1}_{h} \|^{2} + \frac{C_{4}C_{T}}{4 {\epsilon'}_6 {\delta'}_6} \| \phi^{n-1}_{h} \|^{2},\notag
% \\ \vert - b^{\ast}(2\phi^{n}_{h}-\phi^{n-1}_{h},I_{h}\tau,\psi^{n+1}_{h}) \vert &\leq {C_{4}C_{I}C_{tr}|\Gamma_{N}|^{1/2}}\Big\{\epsilon_{14} \| \nabla \psi^{n+1}_{h} \|^{2} + \frac{\delta_{14}}{2 \epsilon_{14}} \| \nabla \phi^{n}_{h} \|^{2} + \frac{1}{2 \epsilon_{14} \delta_{14}} \| \phi^{n}_{h} \|^{2} \notag
% \\ &+ \frac{{\epsilon'}_{14}}{2} \| \nabla \psi^{n+1}_{h} \|^{2} + \frac{{\delta'}_{14}}{4 {\epsilon'}_{14}} \| \nabla \phi^{n-1}_{h} \|^{2} + \frac{1}{4 {\epsilon'}_{14} {\delta'}_{14}} \| \phi^{n-1}_{h} \|^{2}\Big\},\notag
% \end{align}
\begin{align}%%%%           6,13              %%%%
-b^{\ast}(2\phi^{n}_{h}-\phi^{n-1}_{h},\theta^{n+1}_{h},\psi^{n+1}_{h}) &\leq C_{4}\| \nabla \theta^{n+1}_{h} \| \| \nabla \psi^{n+1}_{h} \| \sqrt{\| 2\phi^{n}_{h}-\phi^{n-1}_{h} \| \| \nabla (2\phi^{n}_{h}-\phi^{n-1}_{h}) \|} 
\\ &\leq C_{4} C_{\theta}(j) \| \nabla \psi^{n+1}_{h} \| \sqrt{\| 2\phi^{n}_{h}-\phi^{n-1}_{h} \| \| \nabla (2\phi^{n}_{h}-\phi^{n-1}_{h}) \|} \notag
\\ &\leq \epsilon_{6}\| \nabla \psi^{n+1}_{h} \|^2 + \frac{C_{4}^{2} C_{\theta}^{2}}{4\epsilon_{6}} \| 2\phi^{n}_{h}-\phi^{n-1}_{h} \| \| \nabla (2\phi^{n}_{h}-\phi^{n-1}_{h}) \| \notag
\\ &\leq \epsilon_{6}\| \nabla \psi^{n+1}_{h} \|^2 + \frac{C_{4}^{2} C_{\theta}^{2}}{8\epsilon_{6}\delta_{6}} \| 2\phi^{n}_{h}-\phi^{n-1}_{h} \|^{2} \notag
\\ &+ \frac{C_{4}^{2} C_{\theta}^{2}\delta_{6}}{2\epsilon_{6}} \Big( \| \nabla \phi^{n}_{h} \|^{2} + \| \nabla \phi^{n-1}_{h} \|^{2}\Big), \notag
\\ - b^{\ast}(2\phi^{n}_{h}-\phi^{n-1}_{h},I_{h}\tau,\psi^{n+1}_{h}) &\leq \epsilon_{13}\| \nabla \psi^{n+1}_{h} \|^2 + \frac{C_{4}^{2} C_{I}^{2}C_{tr}^{2} |\Gamma_{N}|}{8\epsilon_{13}\delta_{13}} \| 2\phi^{n}_{h}-\phi^{n-1}_{h} \|^{2}
\\ &+ \frac{C_{4}^{2} C_{I}^{2}C_{tr}^{2} |\Gamma_{N}|\delta_{13}}{2\epsilon_{13}} \Big( \| \nabla \phi^{n}_{h} \|^{2} + \| \nabla \phi^{n-1}_{h} \|^{2}\Big). \notag
\end{align}
Use Lemma \ref{l1}, the inverse inequality, and the Cauchy-Schwarz-Young inequality yielding
\begin{align}%%%%%%		8			%%%%%%%
\Delta t b^{\ast}({u'}^{n}_{h},\psi^{n+1}_{h}-2\psi^{n}_{h}+\psi^{n-1}_{h},\psi^{n+1}_{h}) &\leq \frac{\Delta t C_{6} C^{1/2}_{inv,2}}{h^{1/2}} \| \nabla {u'}^{n}_{h} \| \| \nabla \psi^{n+1} \| \| \psi^{n+1}_{h}-2\psi^{n}_{h}+\psi^{n-1}_{h} \|
\\ &\leq \frac{2C_{6}^{2} C_{inv,2} \Delta t^2}{h} \| \nabla {u'}^{n}_{h} \|^{2} \| \nabla \psi^{n+1}_{h} \|^{2} + \frac{1}{8} \| \psi^{n+1}_{h}-2\psi^{n}_{h}+\psi^{n-1}_{h} \|^{2}.\notag
\end{align}
Use the Cauchy-Schwarz-Young inequality on the first term.  Apply Lemma \ref{l1}, interpolant estimates, and Taylor's theorem on the remaining.  Then,
\begin{align}%%%%           10,11,12,14              %%%%
b^{\ast}(u^{n+1}-2u^{n}+u^{n-1},\tau,\psi^{n+1}_{h}) &\leq C_{3} \| \nabla (u^{n+1}-2u^{n}+u^{n-1}) \| \|\nabla \tau\| \| \nabla \psi^{n+1}_{h} \| 
\\ &\leq \frac{CC_{r}C_{3}^2 C_{tr}^{2} |\Gamma_{N}| \Delta t^{3}}{\epsilon_{10}} \| \nabla u_{tt} \|^{2}_{L^{2}(t^{n-1},t^{n+1};L^{2}(\Omega))} + \frac{\epsilon_{10}}{r} \|\nabla \psi^{n+1}_{h} \|, \notag
\\ b^{\ast}(2u^{n}-u^{n-1},\tau - I_{h}\tau,\psi^{n+1}_{h}) &\leq C_{3}\| (2u^{n}-u^{n-1}) \| \|\nabla \tau - I_{h}\tau \| \| \nabla \psi^{n+1}_{h} \|
\\ &\leq \frac{C_{r} C_{3}^{2}}{\epsilon_{11}} C_{3}\| (2u^{n}-u^{n-1}) \|^{2} \|\nabla \tau - I_{h}\tau \|^{2} + \frac{\epsilon_{11}}{r} \| \nabla \psi^{n+1}_{h} \|^{2}, \notag
\\ b^{\ast}(2\eta^{n}-\eta^{n-1},I_{h}\tau,\psi^{n+1}_{h}) &\leq C_{4}\|\nabla I_{h}\tau \| \| \nabla \psi^{n+1}_{h} \|\Big\{2\sqrt{\| \eta^{n}\| \| \nabla \eta^{n}\|} + \sqrt{\| \eta^{n-1}\| \| \nabla \eta^{n-1}\|}\}  
\\ &\leq \frac{8C_{r} C_{4}^{2}C_{I}^{2}C_{tr}^{2}|\Gamma_{N}|}{\epsilon_{12}} \Big\{\| \eta^{n}\| \| \nabla \eta^{n}\| + \| \eta^{n-1}\| \| \nabla \eta^{n-1}\|\Big\} \notag
\\ &+ \frac{\epsilon_{12}}{r} \| \nabla \psi^{n+1}_{h} \|^{2}, \notag
\\ (\nabla (\tau - I_{h}\tau),\nabla \psi^{n+1}_{h}) &\leq \frac{C_{r}}{\epsilon_{14}} \|\nabla (\tau - I_{h}\tau)\|^{2} + \frac{\epsilon_{14}}{r} \| \nabla \psi^{n+1}_{h} \|^{2}.
\end{align}
The Cauchy-Schwarz-Young inequality, Poincar\'{e}-Friedrichs inequality and Taylor's theorem yield
\begin{align}%%%%%				1			%%%%
\frac{1}{2\Delta t} (3\zeta^{n+1} - 4\zeta^{n} + \zeta^{n-1}, \psi^{n+1}_{h}) \leq \frac{C C^{2}_{PF,2} C_{r}}{\Delta t \epsilon_1} \| \zeta_{t} \|^{2}_{L^{2}(t^{n-1},t^{n+1};L^{2}(\Omega))} + \frac{\epsilon_1}{r} \| \nabla \psi^{n+1}_{h} \|^{2}.
\end{align}
Lastly, use the Cauchy-Schwarz-Young inequality,
\begin{align}%%%%%				2				%%%%%
(\nabla \zeta^{n+1},\nabla \psi^{n+1}_{h}) \leq \frac{C_{r}}{\epsilon_2} \| \nabla \zeta^{n+1} \|^{2} + \frac{\epsilon_2}{r} \| \nabla \psi^{n+1} \|^{2}.
\end{align}
Similar estimates follow for the r.h.s. terms in (\ref{feu1}), however, we must treat an additional pressure term and error term associated with the temperature,
\begin{align}%%%%% 			17-21			%%%%%
-(p^{n+1}-q^{n+1}_{h},\nabla \cdot \phi^{n+1}_{h}) &\leq \sqrt{d} \| p^{n+1}-q^{n+1}_{h} \| \| \nabla \phi^{n+1}_{h} \| \leq \frac{d C_{r}}{\epsilon_{17}} \| p^{n+1}-q^{n+1}_{h} \|^{2}
\\ &+ \frac{\epsilon_{17}}{r}\| \nabla \phi^{n+1}_{h} \|^{2},
\\ - PrRa(\xi (\theta^{n+1}-2\theta^{n}+\theta^{n-1}),\phi^{n+1}_{h}) &\leq \frac{C Pr^{2} Ra^{2} C^{2}_{PF,1} C_{r}\Delta t^{3} }{\epsilon_{18}} \| \theta_{tt} \|^{2}_{L^{2}(t^{n-1},t^{n+1};L^{2}(\Omega))}
\\ &+ \frac{\epsilon_{18}}{r} \| \nabla \phi^{n+1}_{h} \|^{2},
\\ - PrRa(\xi (2\zeta^{n} - \zeta^{n-1}),\phi^{n+1}_{h}) &\leq \frac{Pr^{2} Ra^{2} C^{2}_{PF,1} C_{r}}{\epsilon_{19}} \Big( 4\| \zeta^{n} \|^{2} + \| \zeta^{n-1} \|^{2} \Big) + \frac{\epsilon_{19}}{r} \| \nabla \phi^{n+1}_{h} \|^{2},
\\ PrRa(\xi (2\psi^{n}_{h}-\psi^{n-1}_{h}),\phi^{n+1}_{h}) &\leq \frac{Pr^{2} Ra^{2} C^{2}_{PF,1} C^{2}_{PF,2} C_{r}}{\epsilon_{20}}\| 2\psi^{n}_{h} - \psi^{n-1}_{h} \|^{2} + \frac{\epsilon_{20}}{r} \| \nabla \phi^{n+1}_{h} \|^{2},
\\ - PrRa(\xi (\tau - I_{h}\tau),\phi^{n+1}_{h}) &\leq \frac{Pr^{2} Ra^{2} C^{2}_{PF,1} C_{r}}{\epsilon_{21}}\| \tau - I_{h}\tau \|^{2} + \frac{\epsilon_{21}}{r} \| \nabla \phi^{n+1}_{h} \|^{2}.
\end{align}
Multiply equations (\ref{fet1}) and (\ref{feu1}) by $\Delta t$.  Apply the above estimates and Lemma \ref{consistency}.  Then,
\begin{multline}\label{error:thick:paramT}
\frac{1}{4} \Big\{\|\psi^{n+1}_{h}\|^{2} + \|2\psi^{n+1}_{h}-\psi^{n}_{h}\|^{2}\Big\} - \frac{1}{4} \Big\{\|\psi^{n}_{h}\|^{2} + \|2\psi^{n}_{h} - \psi^{n-1}_{h}\|^{2}\Big\} + \frac{1}{4} \|\psi^{n+1}_{h} - 2\psi^{n}_{h} + \psi^{n-1}_{h}\|^{2}
\\ + \Delta t \|\nabla \psi^{n+1}_{h}\|^{2} \leq \frac{C C^{2}_{PF,2} C_{r}}{\epsilon_1} \| \zeta_{t} \|^{2}_{L^{2}(t^{n-1},t^{n+1};L^{2}(\Omega))} + \frac{\epsilon_1 \Delta t}{r} \| \nabla \psi^{n+1}_{h} \|^{2} + \frac{C_{r}\Delta t}{\epsilon_2} \| \nabla \zeta^{n+1} \|^{2} + \frac{\epsilon_2\Delta t}{r} \| \nabla \psi^{n+1} \|^{2}
\\ + \frac{C_{r} C_{6}^{2}\Delta t}{\epsilon_3} \| \nabla u^{n+1} \|^{2} \| \| \zeta^{n+1} \| \| \nabla \zeta^{n+1} \| + \frac{\epsilon_3\Delta t}{r} \| \nabla \psi^{n+1}_{h} \|^{2} + \frac{C C_{r} C_{3}^{2} \Delta t^{4}}{\epsilon_4} \| \nabla \theta^{n+1}_{h} \|^{2} \| \nabla u_{tt} \|^{2}_{L^{2}(t^{n-1},t^{n+1};L^{2}(\Omega))}
\\ + \frac{\epsilon_4\Delta t}{r} \| \nabla \psi^{n+1}_{h} \|^{2} + \frac{8C_{r} C_{4}^{2}\Delta t}{\epsilon_5} \| \nabla \theta^{n+1}_{h} \|^{2} \Big\{\| \eta^{n}\| \| \nabla \eta^{n}\| + \| \eta^{n-1}\| \| \nabla \eta^{n-1}\|\Big\} + \frac{\epsilon_5\Delta t}{r} \| \nabla \psi^{n+1}_{h} \|^{2}
\\ + \epsilon_{6}\Delta t\| \nabla \psi^{n+1}_{h} \|^2 + \frac{C_{4}^{2} C_{\theta}^{2}\Delta t}{8\epsilon_{6}\delta_{6}} \| 2\phi^{n}_{h}-\phi^{n-1}_{h} \|^{2} + \frac{C_{4}^{2} C_{\theta}^{2}\delta_{6}\Delta t}{2\epsilon_{6}} \Big( \| \nabla \phi^{n}_{h} \|^{2} + \| \nabla \phi^{n-1}_{h} \|^{2}\Big)
\\ + \frac{C C_{r} C_{3}^{2} \Delta t^{4}}{\epsilon_7} \| \nabla {u'}^{n}_{h} \|^{2}\| \nabla \zeta_{tt} \|^{2}_{L^{2}(t^{n-1},t^{n+1};L^{2}(\Omega))} + \frac{\epsilon_7\Delta t}{r} \| \nabla \psi^{n+1}_{h} \|^{2} + \frac{2C_{6}^{2} C_{inv,2} \Delta t^2}{h} \| \nabla {u'}^{n}_{h} \|^{2} \| \nabla \psi^{n+1}_{h} \|^{2}
\\ + \frac{1}{8} \| \psi^{n+1}_{h}-2\psi^{n}_{h}+\psi^{n-1}_{h} \|^{2} + \frac{C C_{r} C_{3}^{2}\Delta t^{4}}{\epsilon_{9}} \| \nabla {u'}^{n}_{h} \|^{2} \| \nabla \theta_{tt} \|^{2}_{L^{2}(t^{n-1},t^{n+1};L^{2}(\Omega))} + \frac{\epsilon_{9}\Delta t}{r} \| \nabla \psi^{n+1}_{h}\|^{2}
\\ + \frac{CC_{r}C_{3}^2 C_{tr}^{2} |\Gamma_{N}| \Delta t^{4}}{\epsilon_{10}} \| \nabla u_{tt} \|^{2}_{L^{2}(t^{n-1},t^{n+1};L^{2}(\Omega))} + \frac{\epsilon_{10}\Delta t}{r} \|\nabla \psi^{n+1}_{h} \| + \frac{C_{r} C_{3}^{2}\Delta t}{\epsilon_{11}}\| (2u^{n}-u^{n-1}) \|^{2} \|\nabla \tau - I_{h}\tau \|^{2}
\\ + \frac{\epsilon_{11}\Delta t}{r} \| \nabla \psi^{n+1}_{h} \|^{2} + \frac{8C_{r} C_{4}^{2}C_{I}^{2}C_{tr}^{2}|\Gamma_{N}|\Delta t}{\epsilon_{12}} \Big\{\| \eta^{n}\| \| \nabla \eta^{n}\| + \| \eta^{n-1}\| \| \nabla \eta^{n-1}\|\Big\} + \frac{\epsilon_{12}\Delta t}{r} \| \nabla \psi^{n+1}_{h} \|^{2}
\\ + \epsilon_{13}\Delta t \| \nabla \psi^{n+1}_{h} \|^2 + \frac{C_{4}^{2} C_{I}^{2}C_{tr}^{2} |\Gamma_{N}|\Delta t}{8\epsilon_{13}\delta_{13}} \| 2\phi^{n}_{h}-\phi^{n-1}_{h} \|^{2}  + \frac{C_{4}^{2} C_{I}^{2}C_{tr}^{2} |\Gamma_{N}|\delta_{13}\Delta t}{2\epsilon_{13}} \Big( \| \nabla \phi^{n}_{h} \|^{2} + \| \nabla \phi^{n-1}_{h} \|^{2}\Big)
\\ + \frac{C_{r}\Delta t}{\epsilon_{14}} \|\nabla (\tau - I_{h}\tau)\|^{2} + \frac{\epsilon_{14}\Delta t}{r} \| \nabla \psi^{n+1}_{h} \|^{2} + \frac{C C_{PF,2}^{2} C_{r}\Delta t^{4}}{\epsilon_{29}} \|\theta_{ttt}\|^{2}_{L^{2}(t^{n-1},t^{n+1};L^{2}(\Omega))}  + \frac{\epsilon_{29}\Delta t}{r} \| \nabla \psi^{n+1}_{h} \|^{2}
\end{multline}
\noindent and
\begin{multline}\label{error:thick:paramU}
\frac{1}{4} \Big\{\|\phi^{n+1}_{h}\|^{2} + \|2\phi^{n+1}_{h}-\phi^{n}_{h}\|^{2}\Big\} - \frac{1}{4} \Big\{\|\phi^{n}_{h}\|^{2} + \|2\phi^{n}_{h} - \phi^{n-1}_{h}\|^{2}\Big\} + \frac{1}{4} \|\phi^{n+1}_{h} - 2\phi^{n}_{h} + \phi^{n-1}_{h}\|^{2}
\\ + Pr \Delta t \|\nabla \phi^{n+1}_{h}\|^{2} \leq \frac{C C_{r}C^{2}_{PF,1}}{\epsilon_{15}} \| \eta_{t} \|^{2}_{L^{2}(t^{n-1},t^{n+1};L^{2}(\Omega))} + \frac{\Delta t \epsilon_{15}}{r} \| \nabla \phi^{n+1}_{h} \|^{2} + \frac{C_{r} Pr^{2} \Delta t}{\epsilon_{16}}\| \nabla \eta^{n+1} \|^{2}
\\ + \frac{\Delta t \epsilon_{16}}{r} \| \nabla \phi^{n+1}_{h} \|^{2} + \frac{dC_{r}\Delta t}{\epsilon_{17}}\| p^{n+1} - q^{n+1}_{h} \|^{2} + \frac{\Delta t \epsilon_{17}}{r} \| \nabla \phi^{n+1}_{h} \|^{2} + \frac{C Pr^{2} Ra^{2} C^{2}_{PF,1} C_{r}\Delta t^{4} }{\epsilon_{18}} \| \theta_{tt} \|^{2}_{L^{2}(t^{n-1},t^{n+1};L^{2}(\Omega))}
\\ + \frac{\epsilon_{18}\Delta t}{r} \| \nabla \phi^{n+1}_{h} \|^{2} + \frac{Pr^{2} Ra^{2} C^{2}_{PF,1} C_{r}\Delta t}{\epsilon_{19}} \Big( 4\| \zeta^{n} \|^{2} + \| \zeta^{n-1} \|^{2} \Big) + \frac{\epsilon_{19}\Delta t}{r} \| \nabla \phi^{n+1}_{h} \|^{2}
\\ + \frac{Pr^{2} Ra^{2} C^{2}_{PF,1} C^{2}_{PF,2} C_{r}\Delta t}{\epsilon_{20}}\| 2\psi^{n}_{h} - \psi^{n-1}_{h} \|^{2} + \frac{\epsilon_{20}\Delta t}{r} \| \nabla \phi^{n+1}_{h} \|^{2} + \frac{Pr^{2} Ra^{2} C^{2}_{PF,1} C_{r}\Delta t}{\epsilon_{21}}\| \tau - I_{h}\tau \|^{2}
\\ + \frac{\epsilon_{21}\Delta t}{r} \| \nabla \phi^{n+1}_{h} \|^{2} + \frac{C_{5} C_{r} \Delta t}{\epsilon_{22}}\| \nabla u^{n+1} \|^{2} \| \eta^{n+1} \| \| \nabla \eta^{n+1} \| + \frac{\Delta t \epsilon_{22}}{r} \| \nabla \phi^{n+1}_{h} \|^{2} 
\\ + \frac{C C_{r}C_{1}^{2} \Delta t^{4}}{\epsilon_{23}} \| \nabla u^{n+1}_{h} \|^{2} \| \nabla u_{tt} \|^{2}_{L^{2}(t^{n-1},t^{n+1};L^{2}(\Omega))} + \frac{\Delta t \epsilon_{23}}{r} \| \nabla \phi^{n+1}_{h} \|^{2}
\\ + \frac{8C_{r} C_{2}^{2}\Delta t}{\epsilon_{24}} \| \nabla u^{n+1}_{h} \|^{2} \Big\{\| \eta^{n}\| \| \nabla \eta^{n}\| + \| \eta^{n-1}\| \| \nabla \eta^{n-1}\|\Big\} + \frac{\Delta t \epsilon_{24}}{r} \| \nabla \phi^{n+1}_{h} \|^{2} + \epsilon_{25} \Delta t \| \nabla \phi^{n+1}_{h} \|^{2}
\\ + \frac{C_{2}^{2} C_{u}^{2} \Delta t \delta_{25}}{8\epsilon_{25}}\| 2\phi^{n}_{h} - \phi^{n-1}_{h} \|^{2} + \frac{C_{2}^{2} C_{u}^{2} \Delta t}{2\epsilon_{25} \delta_{25}} \Big( \| \nabla \phi^{n}_{h} \|^{2} + \| \nabla \phi^{n-1}_{h} \|^{2} \Big)
\\ + \frac{C C_{r} C_{1}^{2} \Delta t^{4}}{\epsilon_{26}} \| \nabla {u'}^{n}_{h} \|^{2}\| \nabla \eta_{tt} \|^{2}_{L^{2}(t^{n-1},t^{n+1};L^{2}(\Omega))} + \frac{\Delta t \epsilon_{26}}{r} \| \nabla \phi^{n+1}_{h} \|^{2}
\\ + \frac{2 C_{5}^{2} C_{inv,1} \Delta t^{2}}{h} \| \nabla {u'}^{n}_{h} \|^{2} \| \nabla \phi^{n+1}_{h} \|^{2} + \frac{1}{8} \| \phi^{n+1}_{h} - 2\phi^{n}_{h} + \phi^{n-1}_{h} \|^{2}
\\ + \frac{C C_{r}C_{1} \Delta t^{4}}{\epsilon_{28}}\| \nabla {u'}^{n}_{h} \|^{2} \| \nabla u_{tt} \|^{2}_{L^{2}(t^{n-1},t^{n+1};L^{2}(\Omega))} + \frac{\Delta t \epsilon_{28}}{r} \| \nabla \phi^{n+1}_{h} \|^{2}
\\ + \frac{C C^{2}_{PF,1}C_{r} \Delta t^{4}}{\epsilon_{30}} \| u_{ttt} \|^{2}_{L^{2}(t^{n-1},t^{n+1};L^{2}(\Omega))} + \frac{\Delta \epsilon_{30}}{r} \| \nabla \phi^{n+1}_{h} \|^{2}.
\end{multline}
Combine (\ref{error:thick:paramT}) and (\ref{error:thick:paramU}), choose free parameters appropriately, reorganize, use condition (\ref{c1}) and Theorem \ref{t1}.  Add $\|\psi^{n}_{h}\|^{2}$ and $\|\phi^{n}_{h}\|^{2}$ to the r.h.s. and take the maximum over all constants on the r.h.s.  Then,
\begin{multline} 
\frac{1}{4} \Big\{\|\psi^{n+1}_{h}\|^{2} + \|2\psi^{n+1}_{h}-\psi^{n}_{h}\|^{2}\Big\} - \frac{1}{4} \Big\{\|\psi^{n}_{h}\|^{2} + \|2\psi^{n}_{h} - \psi^{n-1}_{h}\|^{2}\Big\} + \frac{1}{8} \|\psi^{n+1}_{h} - 2\psi^{n}_{h} + \psi^{n-1}_{h}\|^{2}
\\ + \frac{\Delta t}{4} \|\nabla \psi^{n+1}_{h}\|^{2} + \frac{1}{4} \Big\{\|\phi^{n+1}_{h}\|^{2} + \|2\phi^{n+1}_{h}-\phi^{n}_{h}\|^{2}\Big\} - \frac{1}{4} \Big\{\|\phi^{n}_{h}\|^{2} + \|2\phi^{n}_{h} - \phi^{n-1}_{h}\|^{2}\Big\}
\\ + \frac{1}{8} \|\phi^{n+1}_{h} - 2\phi^{n}_{h} + \phi^{n-1}_{h}\|^{2} + \frac{Pr \Delta t}{4} \|\nabla \phi^{n+1}_{h}\|^{2} + \frac{Pr \Delta t}{4} \Big( \|\nabla \phi^{n+1}_{h}\|^{2} - \|\nabla \phi^{n}_{h}\|^{2}\Big) + \frac{Pr \Delta t}{8} \Big( \|\nabla \phi^{n}_{h}\|^{2} - \|\nabla \phi^{n-1}_{h}\|^{2}\Big)
\\ \leq C\Big\{\Delta t \| \zeta_{t} \|^{2}_{L^{2}(t^{n-1},t^{n+1};L^{2}(\Omega))} + \Delta t \| \nabla \zeta^{n+1} \|^{2} + \Delta t \| \zeta^{n+1} \| \| \nabla \zeta^{n+1} \| + \Delta t^{4}
\\ + \Delta t \Big\{\| \eta^{n}\| \| \nabla \eta^{n}\| + \| \eta^{n-1}\| \| \nabla \eta^{n-1}\|\Big\} + \Delta t \Big(\|\phi^{n}_{h}\|^{2} + \| 2\phi^{n}_{h} - \phi^{n-1}_{h} \|^{2}\Big) + h \Delta t^{3} + \Delta t \|\nabla \tau - I_{h}\tau \|^{2}
\\ + \Delta t \| \eta_{t} \|^{2}_{L^{2}(t^{n-1},t^{n+1};L^{2}(\Omega))} + \Delta t \| \nabla \eta^{n+1} \|^{2} + \Delta t \| p^{n+1} - q^{n+1}_{h} \|^{2} + \Delta t \Big(4\| \zeta^{n}\|^{2} + \| \nabla \zeta^{n-1}\|^{2}\Big)
\\ + \Delta t \Big(\|\psi^{n}_{h}\|^{2} + \| 2\psi^{n}_{h} - \psi^{n-1}_{h} \|^{2}\Big) + \Delta t \| \tau - I_{h}\tau\|^{2} + \Delta t \|\eta^{n+1} \| \| \nabla \eta^{n+1} \|\Big\}.
\end{multline}
Multiply by 4, sum from $n = 1$ to $n = N-1$, apply Lemma \ref{l4}, take infimums over $X_{h}$, $Q_{h}$, and $\hat{W_{h}}$, and renorm. Then,
\begin{multline*}
\|\psi^{N}_{h}\|^{2} + \|2\psi^{N}_{h}-\psi^{N-1}_{h}\|^{2} + \|\phi^{N}_{h}\|^{2} + \|2\phi^{N}_{h} - \phi^{N-1}_{h}\|^{2} + \frac{1}{2}\sum_{n = 1}^{N-1}\big(\|\psi^{n+1}_{h} - 2\psi^{n}_{h} + \psi^{n-1}_{h}\|^{2} + \|\phi^{n+1}_{h} - 2\phi^{n}_{h} + \phi^{n-1}_{h}\|^{2}\big)
\\ + \Delta t \sum_{n = 1}^{N-1} \|\nabla \psi^{n+1}_{h}\|^{2} + Pr \Delta t \sum_{n = 1}^{N-1} \|\nabla \phi^{n+1}_{h}\|^{2} + \frac{Pr \Delta t}{2} \Big(\|\nabla \phi^{N}_{h}\|^{2} +\frac{1}{2} \|\nabla \phi^{N-1}_{h}\|^{2}\Big)
\\ \leq \exp(Ct^{\ast}) \Big\{ \Delta t \inf_{S_{h} \in W_{\Gamma_{1,h}}} \Big( \vertiii{\theta - S_{h}}^{2}_{\infty,0} + \vertiii{\theta - S_{h}}_{\infty,0} \vertiii{\nabla (\theta - S_{h})}_{\infty,0} + \vertiii{\nabla (\theta - S_{h})}^{2}_{\infty,0} + \vertiii{(\theta - S_{h})_{t}}^{2}_{\infty,0}
\\ + h \Delta t^{2} \vertiii{(\theta - S_{h})_{tt}}^{2}_{\infty,0} \Big) + \Delta t \inf_{v_{h} \in X_{h}} \Big( \vertiii{u - v_{h} }^{2}_{\infty,0} + \vertiii{u - v_{h} }^{2}_{\infty,0} \vertiii{\nabla (u - v_{h}) }_{\infty,0} + \vertiii{\nabla (u - v_{h}) }^{2}_{\infty,0}
\\ + \vertiii{(u - v_{h})_{t}}^{2}_{\infty,0} + h \Delta t^{2} \vertiii{(u - v_{h})_{tt}}^{2}_{\infty,0} \Big) + \Delta t \inf_{q_{h} \in Q_{h}} \vertiii{ p - q_{h} }^{2}_{\infty,0}
\\ + \Delta t \inf_{S_{h} \in W_{h}} \Big( \vertiii{\tau - I_{h}\tau}^{2}_{\infty,0} + \vertiii{\nabla (\tau - I_{h}\tau) }^{2}_{\infty,0}\Big) + h\Delta t^{3} + \Delta t^{4} \Big\}
\\ + \|\psi^{0}_{h}\|^{2} + \|2\psi^{1}_{h}-\psi^{0}_{h}\|^{2} + \|\phi^{0}_{h}\|^{2} + \|2\phi^{1}_{h} - \phi^{0}_{h}\|^{2} + \frac{Pr \Delta t}{2} \Big(\|\nabla \phi^{1}_{h}\|^{2} +\frac{1}{2} \|\nabla \phi^{0}_{h}\|^{2}\Big).
\end{multline*}
The result follows by the relationship $e^{n}_{T} = e^{n}_{\theta} + \tau - I_{h}\tau$ and the triangle inequality.
\end{proof}
\begin{corollary}
	Suppose the assumptions of Theorem \ref{t1} hold with $k=m=2$.  Further suppose that the finite element spaces ($X_{h}$,$Q_{h}$,$W_{h}$) are given by P2-P1-P2 (Taylor-Hood), then the errors in velocity and temperature satisfy
	\begin{multline*}
\frac{1}{2}\|e^{N}_{T}\|^{2} + \frac{1}{2}\|2e^{N}_{T}-e^{N-1}_{T}\|^{2} + \|e^{N}_{u}\|^{2} + \|2e^{N}_{u} - e^{N-1}_{u}\|^{2} + \frac{1}{2}\sum_{n = 1}^{N-1}\big(\|e^{n+1}_{T} - 2e^{n}_{T} + e^{n-1}_{T}\|^{2} + \|e^{n+1}_{u} - 2e^{n}_{u} + e^{n-1}_{u}\|^{2}\big)
\\ + \frac{\Delta t}{2} \sum_{n = 1}^{N-1} \|\nabla e^{n+1}_{T}\|^{2} + Pr \Delta t \sum_{n = 1}^{N-1} \|\nabla e^{n+1}_{u}\|^{2} + \frac{Pr \Delta t}{2} \Big(\|\nabla e^{N}_{u}\|^{2} +\frac{1}{2} \|\nabla e^{N-1}_{u}\|^{2}\Big)
\\ \leq C \Big\{ h^{6}\Delta t + h^{5}\Delta t + h^{4}\Delta t + h^{7}\Delta t^{2} + h\Delta t^{3} + h^{4}\Delta t + h \Delta t^{3} + \Delta t^{4}
\\ + \|\zeta^{0}_{h}\|^{2} + \|2\zeta^{1}_{h}-\zeta^{0}_{h}\|^{2} + \|\eta^{0}_{h}\|^{2} + \|2\eta^{1}_{h} - \phi^{0}_{h}\|^{2} + \frac{Pr \Delta t}{2} \Big(\|\nabla \eta^{1}_{h}\|^{2} +\frac{1}{2} \|\nabla \eta^{0}_{h}\|^{2}\Big)
\\ + \frac{1}{2}\Big(\|e^{0}_{T}\|^{2} + \|2e^{1}_{T}-e^{0}_{T}\|^{2}\Big) + \|e^{0}_{u}\|^{2} + \|2e^{1}_{u} - e^{0}_{u}\|^{2} + \frac{Pr \Delta t}{2} \Big(\|\nabla e^{1}_{u}\|^{2} +\frac{1}{2} \|\nabla e^{0}_{u}\|^{2}\Big)\Big\}.
	\end{multline*}
\end{corollary}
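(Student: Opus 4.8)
The plan is to obtain the corollary as an immediate specialization of Theorem~\ref{error:thick}, whose right-hand side consists entirely of best-approximation errors in $X_h,Q_h,W_{\Gamma_1,h}$ together with consistency remainders already expressed in powers of $\Delta t$; it therefore suffices to insert the Taylor--Hood approximation rates and tally the resulting exponents. With velocity and temperature in $P2$ we have $k=2$, so the approximation properties~(\ref{a1}) and~(\ref{a3}), combined with the regularity~(\ref{error:regularity}), give $\vertiii{u-v_h}_{\infty,0},\vertiii{\theta-S_h}_{\infty,0}=O(h^{3})$ and $\vertiii{\nabla(u-v_h)}_{\infty,0},\vertiii{\nabla(\theta-S_h)}_{\infty,0}=O(h^{2})$; since the Lagrange interpolant commutes with $\partial_t$ on a fixed mesh, the temporal derivatives $(u-v_h)_t,(\theta-S_h)_t$ and $(u-v_h)_{tt},(\theta-S_h)_{tt}$ inherit the $O(h^{3})$ rate in $L^{2}$ from the regularity of $u_t,T_t,u_{tt},T_{tt}$. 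For the pressure, $P1$ corresponds to $m=2$ in~(\ref{a2}), whence $\vertiii{p-q_h}_{\infty,0}=O(h^{2})$.

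First I would replace each summand of the temperature infimum over $W_{\Gamma_1,h}$ in Theorem~\ref{error:thick} by its rate, carrying the outer factor $\Delta t$: $\vertiii{\theta-S_h}^2_{\infty,0}\mapsto h^{6}$, $\vertiii{\theta-S_h}_{\infty,0}\vertiii{\nabla(\theta-S_h)}_{\infty,0}\mapsto h^{5}$, $\vertiii{\nabla(\theta-S_h)}^2_{\infty,0}\mapsto h^{4}$, $\vertiii{(\theta-S_h)_t}^2_{\infty,0}\mapsto h^{6}$, and the mixed space--time entry $h\Delta t^{2}\vertiii{(\theta-S_h)_{tt}}^2_{\infty,0}$ supplying the top spatial-order $h^{7}\Delta t^{2}$ term. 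The identical substitutions in the velocity infimum over $X_h$ reproduce $h^{6}\Delta t,h^{5}\Delta t,h^{4}\Delta t,h^{7}\Delta t^{2}$, the pressure infimum over $Q_h$ contributes $\Delta t\,\vertiii{p-q_h}^2_{\infty,0}\mapsto h^{4}\Delta t$, and the extension error $\Delta t\big(\vertiii{\tau-I_h\tau}^2_{\infty,0}+\vertiii{\nabla(\tau-I_h\tau)}^2_{\infty,0}\big)$ yields $h^{6}\Delta t+h^{4}\Delta t$.

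The two remaining bracketed terms of Theorem~\ref{error:thick}, namely $h\Delta t^{3}$ and $\Delta t^{4}$, are already in final form: the former is the inverse-inequality contribution of the lagged fluctuation $b^{\ast}({u'}^{n}_{h},\cdot,\cdot)$ terms once Condition~(\ref{c1}) is used, while the latter is the BDF2 consistency remainder furnished by Lemma~\ref{consistency}. After all substitutions I would absorb the problem parameters, the interpolation constants, and the seminorms of the exact solution (finite by~(\ref{error:regularity})) into the single constant $C$, leaving precisely $h^{6}\Delta t+h^{5}\Delta t+h^{4}\Delta t+h^{7}\Delta t^{2}+h\Delta t^{3}+h^{4}\Delta t+h\Delta t^{3}+\Delta t^{4}$; the initial-data terms are transcribed unchanged, as the hypothesis fixes them only to interpolation accuracy. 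Recalling $e^{n}_{\theta}=\zeta^{n}-\psi^{n}_h$ and $e^{n}_{u}=\eta^{n}-\phi^{n}_h$, then invoking $e^{n}_{T}=e^{n}_{\theta}+\tau-I_h\tau$ with the triangle inequality, recovers the stated left-hand side expressed in $e^{N}_T$ and $e^{N}_u$.

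No genuine difficulty arises beyond bookkeeping; the only point that demands care is pairing each abstract best-approximation quantity with the correct space and the correct number of spatial and temporal derivatives, so that every exponent lands on the advertised list---in particular confirming that the $P1$ pressure enters at $O(h^{2})$ (i.e. $m=2$), so that its square is subsumed by the $h^{4}\Delta t$ contribution and does not degrade the spatial rate, and verifying that the mixed temporal term indeed produces the high-order $h^{7}\Delta t^{2}$ entry rather than a neighbouring power of $\Delta t$.
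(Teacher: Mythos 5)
Your proposal is correct and follows exactly the route the paper intends: the corollary is an immediate specialization of Theorem \ref{error:thick}, obtained by inserting the Taylor--Hood rates from (\ref{a1})--(\ref{a3}) with $k=m=2$ (choosing the interpolant in time so the $t$-derivative terms inherit the $O(h^{3})$ rate via the regularity (\ref{error:regularity})), tallying exponents, and absorbing constants. Your exponent bookkeeping matches the stated list term by term --- including noting that the mixed entry $h\Delta t^{2}\vertiii{(\theta-S_{h})_{tt}}^{2}_{\infty,0}$ yields at worst the advertised $h^{7}\Delta t^{2}$ --- so nothing further is needed.
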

\begin{corollary}
	Suppose the assumptions of Theorem \ref{t1} hold with $k=m=1$.  Further suppose that the finite element spaces ($X_{h}$,$Q_{h}$,$W_{h}$) are given by P1b-P1-P1b (MINI element), then the errors in velocity and temperature satisfy
	\begin{multline*}
\frac{1}{2}\|e^{N}_{T}\|^{2} + \frac{1}{2}\|2e^{N}_{T}-e^{N-1}_{T}\|^{2} + \|e^{N}_{u}\|^{2} + \|2e^{N}_{u} - e^{N-1}_{u}\|^{2} + \frac{1}{2}\sum_{n = 1}^{N-1}\big(\|e^{n+1}_{T} - 2e^{n}_{T} + e^{n-1}_{T}\|^{2} + \|e^{n+1}_{u} - 2e^{n}_{u} + e^{n-1}_{u}\|^{2}\big)
\\ + \frac{\Delta t}{2} \sum_{n = 1}^{N-1} \|\nabla e^{n+1}_{T}\|^{2} + Pr \Delta t \sum_{n = 1}^{N-1} \|\nabla e^{n+1}_{u}\|^{2} + \frac{Pr \Delta t}{2} \Big(\|\nabla e^{N}_{u}\|^{2} +\frac{1}{2} \|\nabla e^{N-1}_{u}\|^{2}\Big)
\\ \leq C \Big\{ h^{4}\Delta t + h^{3}\Delta t + h^{2}\Delta t + h^{5}\Delta t^{2} + h\Delta t^{3} + h^{2} \Delta t + h \Delta t^{3} + \Delta t^{4}
\\ + \|\zeta^{0}_{h}\|^{2} + \|2\zeta^{1}_{h}-\zeta^{0}_{h}\|^{2} + \|\eta^{0}_{h}\|^{2} + \|2\eta^{1}_{h} - \phi^{0}_{h}\|^{2} + \frac{Pr \Delta t}{2} \Big(\|\nabla \eta^{1}_{h}\|^{2} +\frac{1}{2} \|\nabla \eta^{0}_{h}\|^{2}\Big)
\\ + \frac{1}{2}\Big(\|e^{0}_{T}\|^{2} + \|2e^{1}_{T}-e^{0}_{T}\|^{2}\Big) + \|e^{0}_{u}\|^{2} + \|2e^{1}_{u} - e^{0}_{u}\|^{2} + \frac{Pr \Delta t}{2} \Big(\|\nabla e^{1}_{u}\|^{2} +\frac{1}{2} \|\nabla e^{0}_{u}\|^{2}\Big)\Big\}.
	\end{multline*}
\end{corollary}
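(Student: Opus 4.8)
The plan is to derive this corollary directly from Theorem~\ref{error:thick}, which has already reduced the velocity and temperature error to (i) a sum of best-approximation quantities for $\theta$, $u$, $p$ and $\tau$ in the chosen finite element spaces, (ii) the mesh-independent time-discretization remainders $h\Delta t^{3}$ and $\Delta t^{4}$, and (iii) initial-data terms. For the MINI element the velocity and temperature components are continuous piecewise linears enriched by a bubble, so the approximation exponent in (\ref{a1}) and (\ref{a3}) is $k=1$, while the continuous piecewise linear pressure gives $m=1$ in (\ref{a2}). First I would realize each infimum by the corresponding interpolant---$v_{h}$, $S_{h}$, $q_{h}$ for $u$, $\theta$, $p$, and $I_{h}\tau$ for $\tau$---and invoke Lemma~\ref{l5} to pass from $X_{h}$ to $V_{h}$ where needed, so that (\ref{a1})--(\ref{a3}) and the interpolation bound for $\tau$ control every best-approximation norm by $Ch^{k+1}$, $Ch^{k}$, or $Ch^{m}$ times a fixed Sobolev seminorm of the exact solution.

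Next I would substitute these estimates term by term into the bound of Theorem~\ref{error:thick}, using the regularity hypotheses (\ref{error:regularity}) to bound the resulting seminorms uniformly in $n$. With $k=1$ the $L^{2}$ best-approximation terms $\vertiii{\theta-S_{h}}^{2}_{\infty,0}$ and $\vertiii{u-v_{h}}^{2}_{\infty,0}$ contribute $h^{2(k+1)}=h^{4}$, the mixed $L^{2}$--$H^{1}$ cross terms contribute $h^{2k+1}=h^{3}$, the gradient terms $\vertiii{\nabla(\theta-S_{h})}^{2}_{\infty,0}$, $\vertiii{\nabla(u-v_{h})}^{2}_{\infty,0}$ and $\vertiii{\nabla(\tau-I_{h}\tau)}^{2}_{\infty,0}$ contribute $h^{2k}=h^{2}$, and the pressure term $\vertiii{p-q_{h}}^{2}_{\infty,0}$ contributes $h^{2m}=h^{2}$. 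The temporal best-approximation terms $\vertiii{(\theta-S_{h})_{t}}^{2}_{\infty,0}$ and $\vertiii{(u-v_{h})_{t}}^{2}_{\infty,0}$ are handled by letting the interpolation operator act on $\theta_{t}$ and $u_{t}$ directly, which is licit because (\ref{error:regularity}) supplies $u_{t},T_{t}\in L^{\infty}(0,t^{\ast};H^{k+1})$; these again yield $h^{4}$, while the $h\Delta t^{2}$-weighted second-derivative terms, controlled via $u_{tt},T_{tt}\in L^{\infty}(0,t^{\ast};H^{k+1})$, produce the $h^{5}\Delta t^{2}$ entry.

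Finally I would carry the global $\Delta t$ prefactor through each space-approximation contribution, append the mesh-independent remainders $h\Delta t^{3}$ and $\Delta t^{4}$ coming from the consistency estimates of Lemma~\ref{consistency}, and collect like powers, keeping the $\exp(Ct^{\ast})$ factor, condition (\ref{c1}), and the initial-data terms exactly as they appear in Theorem~\ref{error:thick}. This reproduces the stated list $h^{4}\Delta t + h^{3}\Delta t + h^{2}\Delta t + h^{5}\Delta t^{2} + h\Delta t^{3} + h^{2}\Delta t + h\Delta t^{3} + \Delta t^{4}$. I do not anticipate any genuine analytical difficulty, since no new estimate is required; the only point demanding care is the bookkeeping of the paired $(h,\Delta t)$ exponents---verifying that each temporal best-approximation term carries the right power and that the regularity assumed in (\ref{error:regularity}) is strong enough to bound every seminorm appearing in the substitution.
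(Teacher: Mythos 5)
Your proposal is correct and follows exactly the route the paper intends: the corollary is stated without separate proof precisely because it is a direct specialization of Theorem~\ref{error:thick}, obtained by realizing each infimum with the interpolant and inserting the approximation properties (\ref{a1})--(\ref{a3}) with $k=m=1$, which is what you do. Your exponent bookkeeping ($h^{4}\Delta t$, $h^{3}\Delta t$, $h^{2}\Delta t$ from the $L^{2}$, cross, and gradient/pressure terms, $h^{5}\Delta t^{2}$ from the $h\Delta t^{2}$-weighted term, plus the $h\Delta t^{3}$ and $\Delta t^{4}$ consistency remainders) matches the stated bound, and your treatment of the time-differentiated best-approximation terms via the regularity assumptions (\ref{error:regularity}) is the standard and correct device.
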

\begin{figure}
	\includegraphics[width=\textwidth,height=\textheight,keepaspectratio]{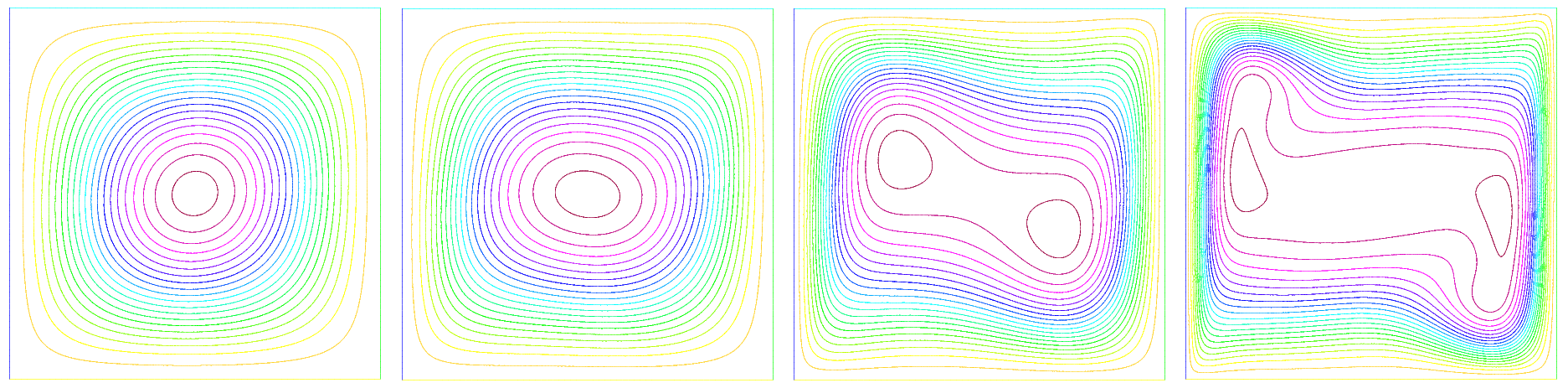}
	\caption{Streamlines for $Ra = 10^3, 10^4, 10^5,$ and $10^6$, from left to right, respectively.}
\end{figure}
\begin{figure}
	\includegraphics[width=\textwidth,height=\textheight,keepaspectratio]{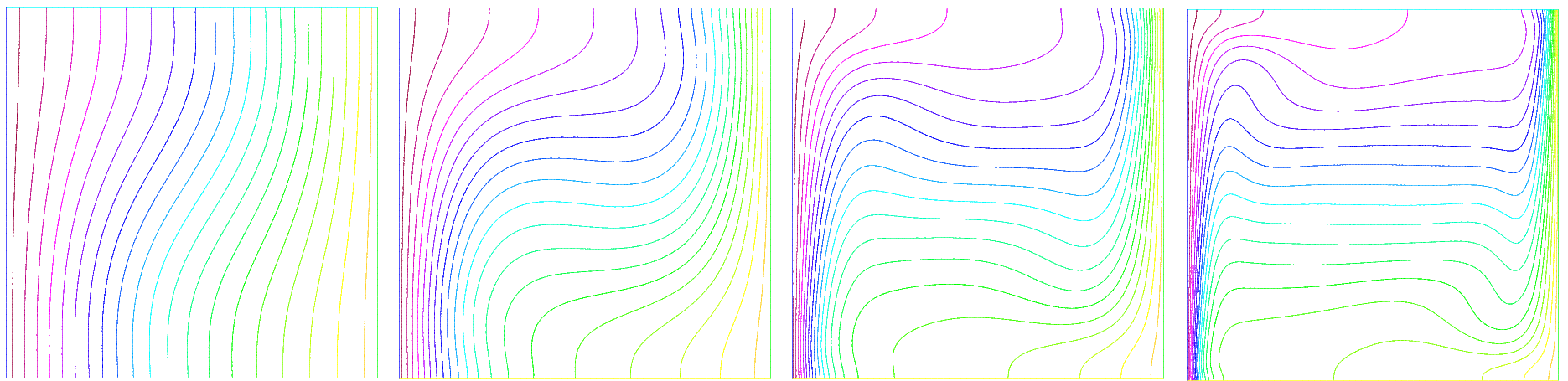}
	\caption{Isotherms for $Ra = 10^3, 10^4, 10^5,$ and $10^6$, from left to right, respectively.}
\end{figure}
\section{Numerical Experiments}
In this section, we illustrate the stability and convergence of the numerical scheme described by (\ref{scheme:one:velocity}) - (\ref{scheme:one:temperature}) using Taylor-Hood (P2-P1-P2) elements to approximate the average velocity, pressure, and temperature.  The numerical experiments include the double pane window benchmark problem of De Vahl Davis \cite{Davis} and a convergence experiment with an analytical solution devised through the method of manufactured solutions.  The software used for all tests is \textsc{FreeFem}$++$ \cite{Hecht}.
\subsection{Stability condition}
The constant appearing in condition \ref{c1} is estimated by pre-computations for the double pane window problem appearing below.  We set $C_{\dagger} = 1$.  The first condition is used and checked at each iteration.  If violated, the timestep is halved and the timestep is repeated.  The timestep is never increased.  The condition is violated two times during the computation of the double pane window problem with $Ra = 10^{6}$ in Section 5.3.
\subsection{Perturbation generation}
The bred vector (BV) algorithm of Toth and Kalnay \cite{Toth} is used to generate perturbations in the double pane window problem.  The BV algorithm produces a perturbation with maximal separation rate.  We set $J = 2$ in all experiments.  An initial random positive/negative perturbation pair was generated $\pm \epsilon = \pm (\epsilon_{1},\epsilon_{2},\epsilon_{3})$ with $\epsilon_{i} \in (0,0.01) \; \forall i = 1,2,3$.  Denote the control and perturbed numerical approximations $\chi^{n}_{h}$ and $\chi^{n}_{p,h}$, respectively.  Then, a bred vector $bv(\chi;\epsilon_{i})$ is generated via:

\textbf{Step one:}  Given $\chi^{0}_{h}$ and $\epsilon_{i}$, put $\chi^{0}_{p,h} = \chi^{0}_{h} + \epsilon_{i}$.  Select time reinitialization interval $\delta t \geq \Delta t$ and let \indent $t^{k} = k \delta t$ with $0 \leq k \leq k^{\ast} \leq N$.

\indent \textbf{Step two:} Compute $\chi^{k}_{h}$ and $\chi^{k}_{p,h}$.  Calculate $bv(\chi^{k};\epsilon_{i}) = \frac{\epsilon_{i}}{\| \chi^{k}_{p,h} - \chi^{k}_{h} \|} (\chi^{k}_{p,h} - \chi^{k}_{h})$.

\indent \textbf{Step three:}  Put $\chi^{k}_{p,h} = \chi^{k}_{h} + bv(\chi^{k};\epsilon_{i}) $.

\indent \textbf{Step four:} Repeat \textbf{Step two} with $k = k + 1$.

\indent \textbf{Step five:} Put $bv(\chi;\epsilon_{i}) = bv(\chi^{k^{\ast}};\epsilon_{i})$.

\noindent A positive/negative perturbed initial condition pair is generated via $\chi_{\pm} = \chi^{0} + bv(\chi;\pm \epsilon_{i})$. Moreover, we let $\delta t = \Delta t = 0.001$ and $k^{\ast} = 5$.

\subsection{The double pane window problem}
The first numerical experiment is the benchmark problem of De Vahl Davis \cite{Davis}.  The problem is the two-dimensional flow of a fluid in an unit square cavity with $Pr = 0.71$. Both velocity components are zero on the boundaries. The horizontal walls are insulated and the left and right vertical walls are maintained at temperatures $T(0,y,t) = 1$ and $T(1,y,t) = 0$, respectively; recall Figure 1.  We let $10^3 \leq Ra \leq 10^6$.  The initial conditions for velocity and temperature are generated via the BV algorithm in Section 5.2,
\begin{align*}
u_{\pm}(x,y,0) := u(x,y,0;\omega_{1,2}) &= (1 + bv(u;\pm \epsilon_{1}), 1 + bv(u;\pm \epsilon_{2}))^{T}, \\
T_{\pm}(x,y,0) := T(x,y,0;\omega_{1,2}) &= 1 + bv(T;\pm \epsilon_{3}).
\end{align*}
Both $f(x,t;\omega_{j})$ and $g(x,t;\omega_{j})$ are identically zero for $j = 1,2$.  The finite element mesh is a division of $[0,1]^{2}$ into $64^{2}$ squares with diagonals connected with a line within each square in the same direction.  The stopping condition is
\begin{equation*}
\max_{1\leq n \leq N-1}\big\{\frac{\| u^{n+1}_{h} - u^{n}_{h}\|}{\| u^{n+1}_{h}\|},\frac{\|T^{n+1}_{h} - T^{n}_{h}\|}{\| T^{n+1}_{h}\|}\big\} \leq 10^{-5}
\end{equation*}
and initial timestep $\Delta t = 0.001$.  The first iterate was computed with the trapezoidal rule for each ensemble member.  The timestep was halved twice to $0.00025$ to maintain stability for $Ra = 10^{6}$.  Several quantities are compared with benchmark solutions in the literature.  These include the maximum vertical velocity at $y = 0.5$, $\max_{x \in \Omega_{h}} {u_{2}(x,0.5,t^{\ast})}$, and maximum horizontal velocity at $x = 0.5$,  $\max_{y \in \Omega_{h}} {u_{1}(0.5,y,t^{\ast})}$.  We present our computed values for the average flow in Tables 1 and 2 alongside several of those seen in the literature.  Furthermore, the local Nusselt number is calculated at the cold (+) and hot walls (-), respectively, via
\begin{equation*}
Nu_{local} =  \pm \frac{\partial{T}}{\partial{x}}.
\end{equation*}
The average Nusselt number on the vertical boundary at x = 0 is calculated via
\begin{equation*}
	Nu_{avg} =  \int^{1}_{0} Nu_{local} dy.
\end{equation*}
Figure 2 presents the plots of $Nu_{local}$ at the hot and cold walls.  Table 3 presents computed values of $Nu_{avg}$ alongside several of those seen in the literature.  Figures 3 and 4 present the velocity streamlines and temperature isotherms for the averages.  All results appear to be in good agreement with the benchmark values in the literature \cite{Davis, Manzari, Wan, Cibik, Zhang}.
\begin{figure}
	\centering
	\includegraphics[width=5.5in,height=\textheight, keepaspectratio]{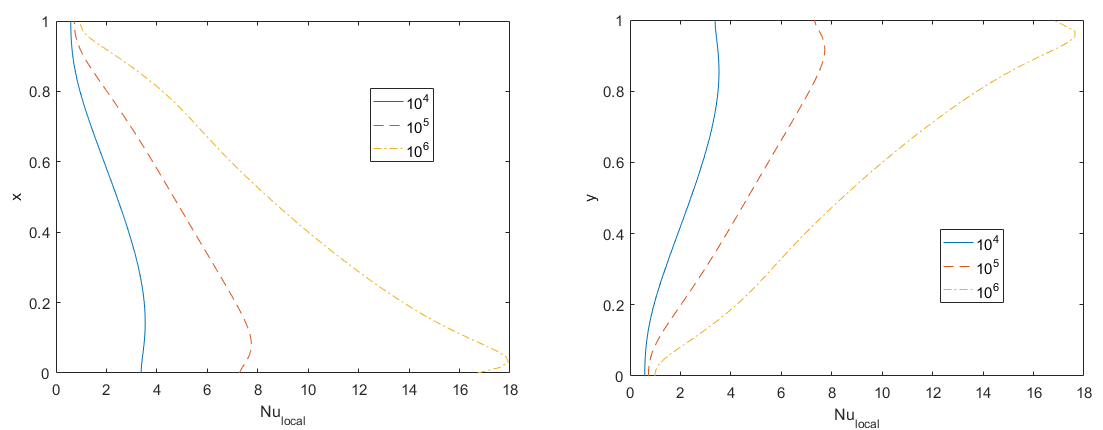}
	\caption{Variation of the local Nusselt number at the hot (left) and cold walls (right).}
\end{figure}
% \vspace{5mm}
% \begin{figure}
% 	\centering
% 	\begin{subfigure}{0.4\textwidth}
% 		\includegraphics[width=\textwidth,height=\textheight,keepaspectratio]{NusseltL}
% 	\end{subfigure}
% 	\begin{subfigure}{0.4\textwidth}
% 		\includegraphics[width=\textwidth,height=\textheight,keepaspectratio]{Nusselt}
% 	\end{subfigure}	
% 	\caption{Variation of the local Nusselt number at the hot (left) and cold walls (right).}
% \end{figure}
\begin{adjustbox}{max width=\textwidth}
	\begin{tabular}{ c  c  c  c  c  c  c  c }
		\hline			
		Ra & Present study & Ref. \cite{Davis} & Ref. \cite{Manzari} & Ref. \cite{Wan} & Ref. \cite{Cibik} & Ref. \cite{Zhang} \\
		\hline
		$10^{4}$ & 16.18 (64$\times$64) & 16.18 (41$\times$41) & 16.10 (71$\times$71) & 16.10 (101$\times$101) & 15.90 (11$\times$11) & 16.18 (64$\times$64)\\
		$10^{5}$ & 34.72 (64$\times$64) & 34.81 (81$\times$81) & 34 (71$\times$71) & 34 (101$\times$101) & 33.51 (21$\times$21) & 34.74 (64$\times$64) \\
		$10^{6}$ & 64.78 (64$\times$64) & 65.33 (81$\times$81) & 65.40 (71$\times$71) & 65.40 (101$\times$101) & 65.52 (32$\times$32) & 64.81 (64$\times$64)\\
		\hline  
	\end{tabular}
\end{adjustbox}
\captionof{table}{Comparison of maximum horizontal velocity at x = 0.5 together with mesh size used in computation for the double pane window problem.}
\begin{adjustbox}{max width=\textwidth}
	\begin{tabular}{ c  c  c  c  c  c  c  c }
		\hline			
		Ra & Present study & Ref. \cite{Davis} & Ref. \cite{Manzari} & Ref. \cite{Wan} & Ref. \cite{Cibik} & Ref. \cite{Zhang} \\
		\hline
		$10^{4}$ & 19.60 (64$\times$64) & 19.51 (41$\times$41) & 19.90 (71$\times$71) & 19.79 (101$\times$101) & 19.91 (11$\times$11) & 19.62 (64$\times$64)\\
		$10^{5}$ & 68.53 (64$\times$64) & 68.22 (81$\times$81) & 70 (71$\times$71) & 70.63 (101$\times$101) & 70.60 (21$\times$21) & 68.48 (64$\times$64) \\
		$10^{6}$ & 215.89 (64$\times$64) & 216.75 (81$\times$81) & 228 (71$\times$71) & 227.11 (101$\times$101) & 228.12 (32$\times$32) & 220.44 (64$\times$64)\\
		\hline  
	\end{tabular}
\end{adjustbox}
\captionof{table}{Comparison of maximum horizontal velocity at y = 0.5 together with mesh size used in computation for the double pane window problem.}
\begin{adjustbox}{max width=\textwidth}
	\begin{tabular}{ c  c  c  c  c  c  c  c }
		\hline			
		Ra & Present study & Ref. \cite{Davis} & Ref. \cite{Manzari} & Ref. \cite{Wan} & Ref. \cite{Cibik} & Ref. \cite{Zhang} \\
		\hline
		$10^{4}$ & 2.25 (64$\times$64) & 2.24 (41$\times$41) & 2.08 (71$\times$71) & 2.25 (101$\times$101) & 2.15 (11$\times$11) & 2.25 (64$\times$64)\\
		$10^{5}$ & 4.53 (64$\times$64) & 4.52 (81$\times$81) & 4.30 (71$\times$71) & 4.59 (101$\times$101) & 4.35 (21$\times$21) & 4.53 (64$\times$64) \\
		$10^{6}$ & 8.89 (64$\times$64) & 8.92 (81$\times$81) & 8.74 (71$\times$71) & 8.97 (101$\times$101) & 8.83 (32$\times$32) & 8.87 (64$\times$64)\\
		\hline  
	\end{tabular}
\end{adjustbox}
\captionof{table}{Comparison of average Nusselt number on the vertical boundary at x = 0 together with mesh size used in computation for the double pane window problem.}
\subsection{Numerical convergence study}
In this section, we illustrate the convergence rates for the proposed algorithm (\ref{scheme:one:velocity}) - (\ref{scheme:one:temperature}).  The unperturbed solution is given by
\begin{align*}
u(x,y,t) &= 10(1+0.1t)(10x^2(x-1)^2y(y-1)(2y-1), -10x(x-1)(2x-1)y^2(y-1)^2)^T, \\
T(x,y,t) &= u_{1}(x,y,t) + u_{2}(x,y,t) + 1-x, \\
p(x,y,t) &= 10(1+0.1t)(2x-1)(2y-1),
\end{align*}
with $Pr = 1.0$, $Ra = 100$, and $\Omega = [0,1]^{2}$.  The perturbed solutions are given by
\begin{align*}
u(x,y,t;\omega_{1,2}) = (1 + \epsilon_{1,2})u(x,y,t), \\
T(x,y,t;\omega_{1,2}) = (1 + \epsilon_{1,2})T(x,y,t), \\
p(x,y,t;\omega_{1,2}) = (1 + \epsilon_{1,2})p(x,y,t), 
\end{align*} 
where $\epsilon_{1} = 1e-2 = -\epsilon_{2}$ and both forcing and boundary terms are adjusted appropriately.  The perturbed solutions satisfy the following relations,
\begin{align*}
< u > = 0.5\big(u(x,y,t;\omega_{1}) + u(x,y,t;\omega_{2}) \big) = u(x,y,t), \\
< T > = 0.5\big(T(x,y,t;\omega_{1}) + T(x,y,t;\omega_{2}) \big) = T(x,y,t), \\
< p > = 0.5\big(p(x,y,t;\omega_{1}) + p(x,y,t;\omega_{2}) \big) = p(x,y,t).
\end{align*}
The finite element mesh $\Omega_{h}$ is a Delaunay triangulation generated from $m$ points on each side of $\Omega$.  We calculate errors in the approximations of the average velocity and temperature with the $L^{\infty}(0,t^{\ast};L^{2}(\Omega))$ and $L^{2}(0,t^{\ast};H^{1}(\Omega))$ norms and the pressure with the $L^{2}(0,t^{\ast};H^{1}(\Omega))$ norm.  Rates are calculated from the errors at two successive $\Delta t_{1,2}$ via
\begin{align*}
\frac{\log_{2}(e_{\chi}(\Delta t_{1})/e_{\chi}(\Delta t_{2}))}{\log_{2}(\Delta t_{1}/\Delta t_{2})},
\end{align*}
respectively, with $\chi = u, T, p$.  We set $m = 2\Delta t$ and vary $\Delta t$ between 8, 16, 24, 32, and 40.  Results are presented in Table 3.  Second order convergence is observed for velocity and temperature in the $L^{2}(0,t^{\ast};H^{1}(\Omega))$ norm and for pressure in the $L^{2}(0,t^{\ast};L^{2}(\Omega))$ norm, as predicted.  Moreover, third order convergence is seen for velocity and temperature $L^{\infty}(0,t^{\ast};L^{2}(\Omega))$ norm, whereby second order convergence is predicted in Theorem \ref{error:thick}.

\vspace{5mm}
\begin{adjustbox}{max width=\textwidth}
\begin{tabular}{ c  c  c  c  c  c  c  c  c  c  c  c }
	\hline			
	$1/m$ & $\vertiii{ <u_{h}>- u }_{\infty,0}$ & Rate & $\vertiii{ \nabla <u_{h}> - \nabla u }_{2,0}$ & Rate & $\vertiii{ <T_{h}> - T }_{\infty,0}$ & Rate & $\vertiii{ \nabla <T_{h}> - \nabla T }_{2,0}$ & Rate & $\vertiii{ <p_{h}> - p }_{2,0}$ & Rate \\
	\hline
	8 & 0.0005600 & - & 0.0206808 & - & 6.96E-05 & - & 0.0030380 & - & 0.0222107 & -\\
	16 & 6.28E-05 & 3.16 & 0.0046705 & 2.15 & 6.81E-06 & 3.35 & 0.0006157 & 2.30 & 0.0050407 & 2.14 \\
	24 & 1.82E-05 & 3.06 & 0.00209424 & 1.98 & 1.90E-06 & 3.14 & 0.0002505 & 2.22 & 0.0021921 & 2.05 \\
	32 & 6.99E-06 & 3.17 & 0.00102235 & 2.19 & 7.82E-07 & 3.12 & 0.0001264 & 2.23 & 0.0011483 & 2.13 \\
	40 & 3.99E-06 & 2.87 & 0.0007429 & 2.03 & 5.08E-07 & 2.59 & 9.25E-05 & 1.82 & 0.0007175 & 2.19 \\
	\hline  
\end{tabular}
\end{adjustbox}
\captionof{table}{Errors and rates for average velocity, temperature, and pressure in corresponding norms.} 
\section{Conclusion}
We presented an algorithm for calculating an ensemble of solutions to laminar natural convection problems.  This algorithm addresses both the competition between ensemble size and resolution in simulations and the need for higher order accurate timestepping methods.  In particular, the algorithm required the solution of two coupled linear systems, each involving a shared coefficient matrix, for multiple right-hand sides at each timestep.  Stability and convergence of the algorithm were proven and numerical experiments were performed to illustrate these properties.

% \begin{figure}
% 	\includegraphics[width=\textwidth,height=\textheight,keepaspectratio]{Streamlines}
% 	\caption{Streamlines for $Ra = 10^3, 10^4, 10^5,$ and $10^6$, from left to right, respectively.}
% \end{figure}
% \begin{figure}
% 	\includegraphics[width=\textwidth,height=\textheight,keepaspectratio]{Isotherms}
% 	\caption{Isotherms for $Ra = 10^3, 10^4, 10^5,$ and $10^6$, from left to right, respectively.}
% \end{figure}

\end{document}